\newtheorem{thm}{Theorem}
\newtheorem{pro}[thm]{Proposition}
\newtheorem{lmm}[thm]{Lemma}
\theoremstyle{definition} 
\newtheorem{dfn}[thm]{Definition}
\newtheorem{ex}[thm]{Example}
\newtheorem{nonex}[thm]{Non-Example}
\newcommand{\N}{\mathbb{N}}
\newcommand{\Z}{\mathbb{Z}}
\newcommand{\A}{\mathbb{A}}
\newcommand{\V}{\mathbb{V}}
\newcommand{\CC}{\mathbb{C}}
\newcommand{\Spec}{{\rm Spec}\,}
\newcommand{\supp}{{\rm supp}}
\newcommand{\st}{\mathsf{st}}
\newcommand{\IN}{\mathsf{in}}
\newcommand{\es}{\mathsf{S}}
\newcommand{\et}{\text{\rm \'et}}
\newcommand{\punc}{{\rm punc}}
\newcommand{\lin}{{\rm lin}}
\newcommand{\lex}{{\rm lex}}
\newcommand\rows{{\rm rows}}
\newcommand\cols{{\rm cols}}
\newcommand{\cfoursecond}{\sideset{}{_2}\sum}
\newcommand{\cfourfirst}{\sideset{}{_1}\sum}
\newcommand\rank{{\rm rank}}
\def\acts{\mathrel{\reflectbox{$\righttoleftarrow$}}}
\newcommand\defn[1]{{\bf #1}}
\begin{document} 

\title{Combinatorial duality of Hilbert schemes of points in the affine plane}
\author{Mathias Lederer}
\email{mlederer@math.uni-bielefeld.de}
\address{Department of Mathematics \\ 
University of Innsbruck \\ 
Technikerstrasse 21a \\
 A-6020 Innsbruck \\ 
 Austria}
\thanks{The author was partially supported by a Marie Curie International Outgoing Fellowship 
of the EU Seventh Framework Program}
\date{\today}
\keywords{Punctual Hilbert scheme of points in the plane, stratification, Gr\"obner basins}
\subjclass[2000]{14C05; 13F20; 13P10; 06A11; 57N80}

\begin{abstract} 
  The Hilbert scheme of $n$ points in the affine plane contains the open subscheme parametrizing $n$ distinct points in the affine plane, 
  and the closed subscheme parametrizing ideals of codimension $n$ supported at the origin of the affine plane. 
  Both schemes admit Bia\l ynicki-Birula decompositions into moduli spaces of ideals with prescribed lexicographic Gr\"obner deformations. 
  We show that both decompositions are stratifications in the sense that the closure of each stratum is a union of more strata. 
  We show that the corresponding two partial orderings on the set of of monomial ideals are dual to each other. 
\end{abstract}

\maketitle


\section{Introduction}\phantomsection\label{sec:intro}

The polynomial ring $S := \CC[x_1,x_2]$ in two variables is the coordinate ring of the affine plane $\A^2$ over the complex numbers. 
The \defn{Hilbert scheme of points in the affine plane} $H^n(\A^2)$ is the moduli space representing the functor 
which sends each $\CC$-algebra $A$ to the set of ideals $I \subseteq S \otimes_\CC A$ such that the quotient 
$(S \otimes_\CC A) / I$ is a locally free $A$-module of dimension $n$. 
However, since we will only be working with $\CC$-valued points, 
we think of $H^n(\A^2)$ as the moduli space of ideals of codimension $n$ in $S$, 
\[
  H^n(\A^2) := \bigl\{ \text{ideals } I \subseteq S : \dim(S / I) = n \bigr\} . 
\]
The scheme $H^n(\A^2)$ is a smooth variety of dimension $2n$ \cite{Fogarty_smoothness, haimanCatalan}. 

The Hilbert scheme of points contains a subscheme 
\[
  H^{n,\punc}(\A^2) := \bigl\{ \text{ideals } I \subseteq S : \dim(S / I) = n, \, \supp(S / I) = \V(x_1,x_2) \bigr\} 
\]
called the \defn{punctual Hilbert scheme}. 
Upon identifying each ideal $I \subseteq S$ defining a point in $H^{n,\punc}(\A^2)$ with its affine subscheme $\Spec(S/I) \subset \A^2$, 
we think of each point in $H^{n,\punc}(\A^2)$ as a \defn{fat point} of fatness $n$ supported at at the origin. 
Each ideal $I$ defining a point in $H^n(\A^2)$ can be factored into $I = I_0 \cap \ldots \cap I_m$ 
such that $\supp(S / I_j) \neq \supp(S / I_k)$ for $j \neq k$, 
and each $I_j$ defines a point in a translated version of $H^{n_j,\punc}(\A^2)$, where $n_j = \dim(S / I_j)$. 
The punctual Hilbert scheme may therefore be viewed as the fundamental building block from which the Hilbert scheme of points is assembled. 
This is the ultimate reason why the punctual Hilbert scheme is of crucial importance many aspects of the theory of Hilbert schemes, including 
\begin{itemize}
  \item Ellingsrud and Str\o mme determination of the Betti numbers of the Hilbert scheme of points in the projective plane \cite{esBetti, esCells};
  \item Grojnowski's and Nakajima's implementation of the ring of symmetric functions in terms of Hilbert schemes of points on surfaces
  \cite{grojnowski, nakajimaAnnals, nakajima};
  \item the ongoing project of describing the non-smooth, non-reduced and non-irreducible nature of the Hilbert schemes of points 
  in $d$-space for $d>2$, the contributors to which include Iarrobino, Emsalem \cite{iarrobino72, iarrobinoEmsalem}, 
  Mazzola \cite{mazzola}, Cartwright, Erman, Velasco and Viray \cite{velasco, ermanMurphy} and many others. 
\end{itemize}

The cited papers by Ellingsrud and Str\o mme are built around three actions of the one-dimensional torus $\CC^\star$ on $S$ inducing 
Bia\l ynicki-Birula decompositions of three types of Hilbert schemes. 
All actions are given by weights $v = (v_1,v_2) \in \Z$. 
Upon using the multi-index notation $x^\alpha := x_1^{\alpha_1}x_2^{\alpha_2}$, the actions are defined by 
\[
  \mathbb{G}_m \acts S : 
  t.x^\alpha = t^{\langle \alpha, v \rangle}x^\alpha , 
\]
where $\langle \alpha, v \rangle = \alpha_1v_1 + \alpha_2v_2$. The first action is defined by a weight vector such that $v_1 \ll v_2 < 0$, 
more precisely, $v_1 \leq nv_2 < 0$. 
The fixed points of this action are monomial ideals $M_\Delta \subseteq S$ of codimension $n$. 
The indexing set of $M_\Delta$ is the corresponding \defn{standard set}, or \defn{staircase} $\Delta \subseteq \N^2$ of cardinality $n$. 
In general, a standard set is a subset of $\N^2$ whose complement is closed under addition of elements from $\N^2$; 
the standard set of a monomial ideal contains all exponents of monomials not showing up in the ideal. 
The Bia\l ynicki-Birula cells of the action (containing all ideals $I \subseteq S$ floating into $M_\Delta$ as $t \to 0$) 
are \defn{lexicographic Gr\"obner strata} or \defn{lexicographic Gr\"obner basins}
\[
  H^\Delta_\lex(\A^2) := \bigl\{ \text{ideals } I \subseteq S : \IN_\lex(I) = M_\Delta \bigr\} .
\]
Here $\IN_\lex(I)$ denotes the lexicographic initial ideal, or lexicographic Gr\"obner deformation of $I$, 
with respect to the lexicographic ordering in which $x_1 > x_2$. 
Thus the Bia\l ynicki-Birula decomposition reads 
\begin{equation}
\label{bbA2}
  H^n(\A^2) = \coprod_{\Delta \in \st_n} H^\Delta_\lex(\A^2) ,
\end{equation}
where $\st_n$ denotes the set of staircases of cardinality $n$. 
This last equation just rephrases the elementary fact that every ideal admits a unique lexicographic Gr\"obner basis. 

The second type of action which Ellingsrud and Str\o mme used will be discussed later in the paper. 
The third type is given by a weight vector such that $0 < v_1 \ll v_2$, more precisely, $0 < nv_1 \leq v_2$. 
The fixed points of this action are also the standard sets $M_\Delta$ for $\Delta \in \st_n$. 
The limit $\lim_{t \to 0} t.I$ exists in $S$ only if $I$ is supported at the origin. 
This follows from the simple observation our torus action on $\A^2$ is given by $t.(a_1,a_2) = (t^{-v_1}a_1,t^{-v_2}a_2)$ on coordinates, 
which will fly off to infinity unless $(a_1,a_2) = (0,0)$. 
As for a schematic treatment of this observation, see \cite{just_bb}. 
It therefore only makes sense to consider this action on $H^{n,\punc}(\A^2)$. 
The Bia\l ynicki-Birula decomposition then reads 
\begin{equation}
\label{bbA2punc}
  H^{n,\punc}(\A^2) = \coprod_{\Delta \in \st_n} H^{\Delta,\punc}_\lex(\A^2) ,
\end{equation}
where 
\[
  H^{\Delta,\punc}_\lex(\A^2) := H^\Delta_\lex(\A^2) \cap H^{n,\punc}_\lex(\A^2) .
\]
Remember that $H^{n,\punc}(\A^2)$ parametrizes points of fatness $n$ supported at the origin. 
Equation \eqref{bbA2punc} thus distinguishes those points according to the shapes of their respective fatnesses. 

Decomposition \eqref{bbA2punc} arises from decomposition \eqref{bbA2} by intersecting both sides with the closed subscheme 
$H^{n,\punc}(\A^2)$ of $H^n(\A^2)$. 
Consider the open subscheme 
\[
  H^{n,\et}(\A^2) := \bigl\{ \text{ideals } I \subseteq S : \dim(S / I) = n, \, |\supp(S/I)| = n \bigr\} 
\]
of $H^n(\A^2)$. 
The superscript stands for \defn{\'etale}, 
since $H^{n,\et}(\A^2)$ represents the functor which sends each $\CC$-algebra $A$ 
to the set of ideals $I \subseteq S \otimes_\CC A$ such that the projection  
$\Spec (S \otimes_\CC A) / I \to \Spec A$ is an \'etale morphism of degree $n$ \cite{components}. 
The two schemes $H^{n,\punc}(\A^2)$ and $H^{n,\et}(\A^2)$ may be viewed as antagonists of each other, 
since the first parametrizes ideals whose corresponding scheme is a fat point supported at the origin, 
whereas the second parametrizes ideals whose corresponding scheme contains no fat points at all. 
Perhaps a more precise way of viewing $H^{n,\punc}(\A^2)$ and $H^{n,\et}(\A^2)$ as antipodes of each other is 
the fact that $H^{n,\punc}(\A^2)$ is the preimage of the $n$-fold origin $n \cdot 0$ under the \defn{Hilbert-Chow morphism}
\[
  \alpha_n : H^{n,\et}(\A^2) \to (\A^2)^{(n)} := (\A^2)^n / S_n ,
\] 
whereas $H^{n,\et}(\A^2)$ is the open locus where the Hilbert-Chow morphism is an isomorphism \cite[Section 2]{bertin}. 
Upon intersecting both sides of \eqref{bbA2} with the open subscheme $H^{n,\et}(\A^2)$ of $H^n(\A^2)$, 
we obtain 
\begin{equation}
\label{bbA2et}
  H^{n,\et}(\A^2) = \coprod_{\Delta \in \st_n} H^{\Delta,\et}_\lex(\A^2) .
\end{equation}
The goal of the present paper is to show that decompositions \eqref{bbA2punc} and \eqref{bbA2et} are antagonists of each other in a combinatorial sense. 
The crucial notion for this are two partial orderings on $\st_n$. 

\begin{dfn}
\label{dfn:twoOrderings}
  \begin{itemize}
    \item Take an arbitrary $\Delta \in \st_n$, 
    and let $R := \{ \Delta_0, \ldots, \Delta_l \}$ be the multiset of rows of $\Delta$. 
    So each $\Delta_i$ is a one-dimensional standard set, i.e., a finite interval in $\N$ starting at $0$. 
    Let $R = R_0 \coprod \ldots \coprod R_m$ be any partition of that multiset. 
    For each $j$, we let $\Delta'_j$ be the one-dimensional standard set 
    whose cardinality is the sum of the cardinalities of elements of $R_j$. 
    Then the multiset $R' := \{ \Delta'_0, \ldots, \Delta'_m \}$ is the multiset of rows of a standard set $\Delta' \in \st_n$. 
    \item We define $\Delta \leq_\et \Delta'$ if, and only if, $\Delta'$ arises from $\Delta$ by the above-described process. 
    \item We define $\Delta \leq_\punc \Delta'$ if, and only if, $(\Delta')^t \leq_\et \Delta^t$, 
    the superscript $t$ standing for transposition of standard sets. 
  \end{itemize}
\end{dfn}
In the terminology of \cite[p.103]{Macdonald}, $\Delta \leq_\et \Delta'$ if \defn{$\Delta$ is a refinement of $\Delta'$}. 
Figures \ref{fig:etalePartialOrdering} and \ref{fig:punctualPartialOrdering} show the Hasse diagrams of $\leq_\et$ and $\leq_\punc$, 
respectively, on standard sets of size 6. Arrows point from smaller to larger elements. 

\begin{center}
\begin{figure}[ht]
  \begin{picture}(430,130)
     \unitlength0.36mm
     \multiput(10,35)(10,0){2}{\line(0,1){60}}
     \multiput(10,35)(0,10){7}{\line(1,0){10}}
     \put(25,60){\vector(1,0){30}}
     \multiput(60,40)(10,0){2}{\line(0,1){50}}
     \put(80,40){\line(0,1){10}}
     \multiput(60,40)(0,10){2}{\line(1,0){20}}
     \multiput(60,60)(0,10){4}{\line(1,0){10}}
     \put(85,65){\vector(3,2){30}}
     \put(85,55){\vector(3,-2){30}}
     \multiput(120,70)(10,0){2}{\line(0,1){40}}
     \multiput(140,70)(10,0){2}{\line(0,1){10}}
     \multiput(120,70)(0,10){2}{\line(1,0){30}}
     \multiput(120,90)(0,10){3}{\line(1,0){10}}
     \put(155,85){\vector(2,1){30}}
     \put(155,75){\vector(2,-1){30}}
     \multiput(120,20)(10,0){2}{\line(0,1){40}}
     \put(140,20){\line(0,1){20}}
     \multiput(120,20)(0,10){3}{\line(1,0){20}}
     \multiput(120,50)(0,10){2}{\line(1,0){10}}
     \put(155,30){\vector(4,1){30}}
     \put(155,40){\vector(2,1){30}}
     \put(155,50){\vector(2,3){30}}
     \multiput(190,90)(10,0){2}{\line(0,1){30}}
     \multiput(190,90)(0,10){2}{\line(1,0){30}}
     \put(210,90){\line(0,1){20}}
     \put(220,90){\line(0,1){10}}
     \put(190,110){\line(1,0){20}}
     \put(190,120){\line(1,0){10}}
     \put(235,100){\vector(1,0){30}}
     \put(235,94){\vector(1,-1){30}}
     \put(235,88){\vector(1,-2){30}}
     \multiput(190,50)(10,0){2}{\line(0,1){30}}
     \multiput(190,70)(0,10){2}{\line(1,0){10}}
     \multiput(190,50)(0,10){2}{\line(1,0){40}}
     \multiput(210,50)(10,0){3}{\line(0,1){10}}
     \put(235,60){\vector(1,0){30}}
     \put(235,53){\vector(1,-1){30}}
     \multiput(190,10)(10,0){3}{\line(0,1){30}}
     \multiput(190,10)(0,10){4}{\line(1,0){20}}
     \put(235,20){\vector(1,0){30}}
     \multiput(270,90)(10,0){4}{\line(0,1){20}}
     \multiput(270,90)(0,10){3}{\line(1,0){30}}
     \put(325,90){\vector(1,-1){30}}
     \multiput(270,50)(10,0){2}{\line(0,1){20}}
     \put(270,70){\line(1,0){10}}
     \multiput(270,50)(0,10){2}{\line(1,0){50}}
     \multiput(290,50)(10,0){4}{\line(0,1){10}}
     \put(325,55){\vector(1,0){30}}
     \multiput(270,10)(10,0){3}{\line(0,1){20}}
     \multiput(270,10)(0,10){2}{\line(1,0){40}}
     \put(270,30){\line(1,0){20}}
     \multiput(300,10)(10,0){2}{\line(0,1){10}}
     \put(325,20){\vector(1,1){30}}
     \multiput(360,50)(0,10){2}{\line(1,0){60}}
     \multiput(360,50)(10,0){7}{\line(0,1){10}}
   \end{picture}
  \caption{The \'etale partial ordering on $\st_6$}
  \phantomsection\label{fig:etalePartialOrdering}
\end{figure}
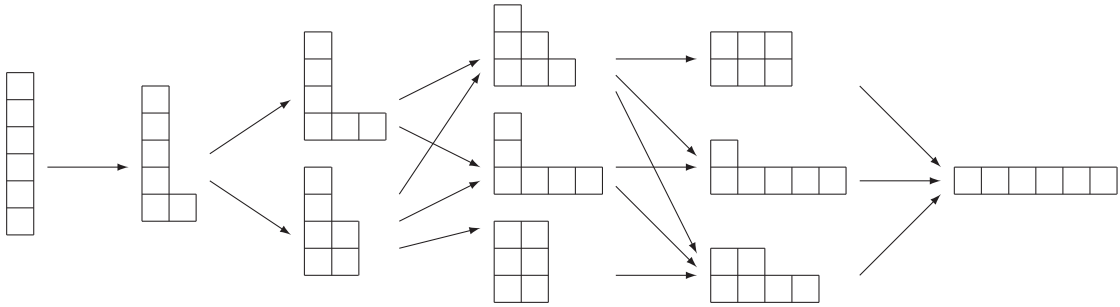
\end{center}

\begin{center}
\begin{figure}[ht]
  \begin{picture}(430,160)
     \unitlength0.36mm
     \multiput(10,50)(10,0){2}{\line(0,1){60}}
     \multiput(10,50)(0,10){7}{\line(1,0){10}}
     \put(25,85){\vector(2,3){30}}
     \put(25,80){\vector(1,0){30}}
     \put(25,75){\vector(2,-3){30}}
     \multiput(60,120)(10,0){3}{\line(0,1){30}}
     \multiput(60,120)(0,10){4}{\line(1,0){20}}
     \put(85,135){\vector(1,0){30}}
     \multiput(60,60)(10,0){2}{\line(0,1){50}}
     \put(80,60){\line(0,1){10}}
     \multiput(60,60)(0,10){2}{\line(1,0){20}}
     \multiput(60,80)(0,10){4}{\line(1,0){10}}
     \put(85,80){\vector(1,0){30}}
     \put(85,85){\vector(2,3){30}}
     \multiput(60,10)(10,0){2}{\line(0,1){40}}
     \put(80,10){\line(0,1){20}}
     \multiput(60,10)(0,10){3}{\line(1,0){20}}
     \multiput(60,40)(0,10){2}{\line(1,0){10}}
     \put(85,30){\vector(1,3){30}}
     \put(85,25){\vector(1,1){30}}
     \put(85,20){\vector(1,0){30}}
     \multiput(120,120)(10,0){2}{\line(0,1){30}}
     \put(140,120){\line(0,1){20}}
     \put(150,120){\line(0,1){10}}
     \multiput(120,120)(0,10){2}{\line(1,0){30}}
     \put(120,140){\line(1,0){20}}
     \put(120,150){\line(1,0){10}}
     \put(155,130){\vector(3,-1){30}}
     \put(155,120){\vector(2,-3){30}}
     \multiput(120,55)(10,0){2}{\line(0,1){40}}
     \multiput(140,55)(10,0){2}{\line(0,1){10}}
     \multiput(120,55)(0,10){2}{\line(1,0){30}}
     \multiput(120,75)(0,10){3}{\line(1,0){10}}
     \put(155,75){\vector(2,-1){30}}
     \put(155,85){\vector(1,1){30}}
     \multiput(120,10)(10,0){4}{\line(0,1){20}}
     \multiput(120,10)(0,10){3}{\line(1,0){30}}
     \put(155,20){\vector(1,1){30}}
     \multiput(190,100)(10,0){2}{\line(0,1){30}}
     \multiput(210,100)(10,0){3}{\line(0,1){10}}
     \multiput(190,100)(0,10){2}{\line(1,0){40}}
     \multiput(190,120)(0,10){2}{\line(1,0){10}}
     \put(235,100){\vector(2,-1){30}}
     \multiput(190,50)(10,0){3}{\line(0,1){20}}
     \multiput(220,50)(10,0){2}{\line(0,1){10}}
     \multiput(190,50)(0,10){2}{\line(1,0){40}}
     \put(190,70){\line(1,0){20}}
     \put(235,60){\vector(2,1){30}}
     \multiput(270,70)(0,10){2}{\line(1,0){50}}
     \put(270,90){\line(1,0){10}}
     \multiput(270,70)(10,0){2}{\line(0,1){20}}
     \multiput(290,70)(10,0){4}{\line(0,1){10}}
     \put(325,75){\vector(1,0){30}}
     \multiput(360,70)(0,10){2}{\line(1,0){60}}
     \multiput(360,70)(10,0){7}{\line(0,1){10}}
   \end{picture}
  \caption{The punctual partial order on $\st_6$}
  \phantomsection\label{fig:punctualPartialOrdering}
\end{figure}
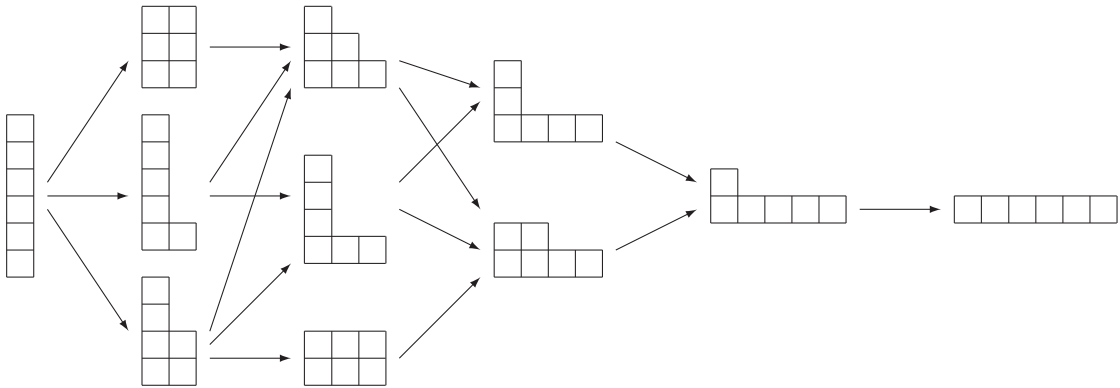
\end{center}

\begin{thm}
\phantomsection\label{thm:combinatorialDuality}
  \begin{enumerate}[(i)]
    \item The closure of $H^{\Delta,\et}_\lex(\A^2)$ in $H^{n,\et}(\A^2)$ is the union of all $H^{\Delta',\et}_\lex(\A^2)$ 
    such that $\Delta \leq_\et \Delta'$. 
    \item The closure of $H^{\Delta,\punc}_\lex(\A^2)$ in $H^{n,\punc}(\A^2)$ is the union of all $H^{\Delta',\punc}_\lex(\A^2)$ 
    such that $\Delta \leq_\punc \Delta'$. 
    \item In particular, decompositions \eqref{bbA2punc} and \ref{bbA2et}
    are stratifications whose partial orderings on $\st_n$ are $\leq_\et$ and $\leq_\punc$, respectively. 
  \end{enumerate}
\end{thm}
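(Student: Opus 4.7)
The plan is to treat (i) and (ii) by parallel but distinct analyses of the parametrizing data of each Gr\"obner basin, and then to deduce (iii) as an immediate consequence.

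For (i), the key geometric observation is that, since $I \in H^{\Delta,\et}_\lex(\A^2)$ is radical and satisfies $\IN_\lex(I) = M_\Delta$, the ideal $I$ must be the vanishing ideal of a configuration of $n$ distinct points whose projection to the $x_2$-axis has fibers of sizes matching the multiset of row-lengths of $\Delta$. This realizes $H^{\Delta,\et}_\lex(\A^2)$ as a bundle over an open locus in a symmetric product of distinct $x_2$-values, with fibers parametrizing the $x_1$-coordinates above each. Admissible degenerations within $H^{n,\et}(\A^2)$ are exactly those in which the $n$ points remain distinct but some of their $x_2$-coordinates collide: when the fibers indexed by a block $R_j$ of the row-partition of $\Delta$ collapse to a common $x_2$-value, the limit has a single $x_2$-fiber of size $\sum_{\Delta_i \in R_j} |\Delta_i|$, which is exactly the operation defining $\Delta \leq_\et \Delta'$. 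The containment $\overline{H^{\Delta,\et}_\lex(\A^2)} \supseteq \bigcup_{\Delta \leq_\et \Delta'} H^{\Delta',\et}_\lex(\A^2)$ is then realized by explicit one-parameter families, while the reverse follows from semi-continuity of the number of distinct $x_2$-coordinates under flat limits.

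For (ii), the ideals parametrized are supported at the origin, so no literal points coalesce; degenerations live entirely at the infinitesimal level, and the appearance of $\Delta^t$ in the definition of $\leq_\punc$ signals that the correct route is to exploit the $x_1 \leftrightarrow x_2$ involution of $H^{n,\punc}(\A^2)$, which transposes staircases but carries the lex order to its opposite (lex with $x_2 > x_1$). Concretely, the plan is to parametrize $H^{\Delta,\punc}_\lex(\A^2)$ by its Gr\"obner data, use the variable swap to identify this basin with one indexed by $\Delta^t$ in the opposite lex order, analyze the closure of the latter via the ``second'' Ellingsrud--Str\o mme torus action (whose Bia\l ynicki-Birula cells on the punctual locus should produce the opposite-lex basins), and transport the resulting closure order back through transposition. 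Part (iii) then follows immediately from (i) and (ii): in each case, the closure of a stratum is a union of strata indexed by the principal filter of a partial order, which is precisely the stratification condition.

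The main obstacle will be step (ii). While the $x_1 \leftrightarrow x_2$ symmetry on the punctual locus is essentially automatic, tying it to the second torus action and to the opposite-lex Gr\"obner basins compatibly with the punctual condition, and then translating the resulting closure order into $\leq_\punc$ on staircases, requires a careful comparison of two quite different Gr\"obner-theoretic parametrizations. By contrast, the \'etale case (i) reduces to an essentially classical analysis of limits of distinct point configurations.
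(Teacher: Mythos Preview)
Your treatment of (i) is essentially the paper's: both identify $H^{\Delta,\et}_\lex(\A^2)$ with configurations of $n$ distinct points whose $x_2$-fibers have sizes given by the row-lengths of $\Delta$, and both see that the only degenerations within $H^{n,\et}(\A^2)$ come from colliding horizontal lines, which is exactly the row-merging operation defining $\leq_\et$.

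Your plan for (ii), however, has a genuine gap. The involution $x_1 \leftrightarrow x_2$ does preserve $H^{n,\punc}(\A^2)$ and does carry $H^{\Delta,\punc}_\lex(\A^2)$ to the opposite-lex basin indexed by $\Delta^t$; but this is a symmetry, not a reduction. Understanding the closure of opposite-lex punctual basins is literally the same problem you started with, with the variables relabelled. The combinatorial identity $\Delta \leq_\punc \Delta' \iff (\Delta')^t \leq_\et \Delta^t$ relates the two \emph{partial orders}, but there is no geometric map taking $H^{n,\punc}(\A^2)$ to $H^{n,\et}(\A^2)$ that would let you import (i). Your appeal to the ``second'' Ellingsrud--Str\o mme action does not help either: that action (weights $v_1<0<v_2$) decomposes $H^{n,\lin}(\A^2)$, not $H^{n,\punc}(\A^2)$, and the torus action whose punctual BB cells are the opposite-lex basins is just the variable-swapped third action, so again you learn nothing new.

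The paper's argument for (ii) is of a quite different character. For the inclusion $\overline{H^{\Delta,\punc}_\lex}\supseteq H^{\Delta',\punc}_\lex$ it works inside the intermediate scheme $H^{n,\lin}(\A^2)$ of ideals supported on the $x_1$-axis: a punctual ideal $I'$ with standard set $\Delta'$ is perturbed into a product of ``tall'' points on $\V(x_2)$, one per column of $\Delta'$; two of these tall points are then merged, and Brian\c con's irreducibility of $H^{m,\punc}(\A^2)$ (used via a lemma on the one-column case) shows the merged factor lies in the closure of the basin for the taller column; finally the configuration is slid back to the origin. Induction over one-column-splits handles general $\Delta\leq_\punc\Delta'$. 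For the reverse inclusion, the paper exhibits, whenever $\Delta\nleq_\punc\Delta'$, an explicit open set cut out by a rank condition on $\langle x_2^0,\ldots,x_2^{h}\rangle$ in $S/I$ that contains $H^{\Delta',\punc}_\lex$ and misses $H^{\Delta,\punc}_\lex$. None of this is accessible through the $x_1\leftrightarrow x_2$ swap alone.
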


Here we use the term \defn{stratification} in the strict sense, namely, 
for a decomposition 
\begin{equation}
\label{eqn:generalStratification}
  X = \coprod_{i \in \es} X_i
\end{equation}
of a topological space $X$ into locally closed subspaces $X_i$ such that the closure of each $X_i$ is a union of more $X_j$. 
This datum induces a partial ordering $\leq$ on the indexing set $\es$ in which $i \leq j$ if, and only if, $X_j \subseteq \overline{X_i}$. 

\subsection*{Outline of the paper}

In Section \ref{sec:c4twoDirections} we provide some basics about a combinatorial operation on standard sets called \defn{Connect Four} 
or \defn{C4} for short. 
This operation secretly defines the partial orderings $\leq_\et$ and $\leq_\punc$
and controls the combinatorics of lexicographic Gr\"obner basins in $H^n(\A^2)$.
The same operation also controls the geometry of lexicographic Gr\"obner basins in $H^{n,\et}(\A^d)$ when $d\geq3$ \cite{components}. 
In the plane case, statements analogous to those of the cited paper reduce to trivialities; 
the author's long-term project is to generalize the findings from the present paper from the plane case to $d\geq3$.
With the prerequisites from Section \ref{sec:c4twoDirections} at hand, the proof of Theorem \ref{thm:combinatorialDuality} (i) is not hard; 
we shall present it in Section \ref{sec:etaleIncidences}. 
The proof of Theorem \ref{thm:combinatorialDuality} (ii), which takes considerably more effort, 
shall be carried out in Section \ref{sec:punctualIncidences}. 
We conclude the paper with a few results and one conjecture 
on incidences among Gr\"obner basins in $H^n(\A^2)$ in Section \ref{sec:generalIncidences}. 

\section*{Acknowledgements}

I wish to thank Allen Knutson and Jenna Rajchgot for our longstanding research collaboration on Hilbert schemes---and many other things. 
Many thanks go to Anthony Iarrobino for inviting me to Boston, 
for giving me the opportunity to present my work at the GASC seminar and the AMS Special Session on Hilbert schemes, 
and for many fruitful and supportive comments about my work. 
I am particularly indebted to Laurent Evain. 
Laurent and I stated and proved Propositions \ref{pro:C4sumFirstDirection} and \ref{pro:C4sumSecondDirection} in joint effort. 
Special thanks go to my students Daniel Heinrich and Patrick Wegener. 
I wrote large parts of this paper while teaching a course on Hilbert schemes, 
in which I got much inspiration from the two. 
I gave a preliminary account of the work presented here at Max Planck Institut f\"ur Mathematik at Bonn; 
I wish to thank Bernd Sturmfels for inviting me to Bonn. 
I presented a finished version of my work at Universit\`a degli Studi di Genova; 
I wish to thank Aldo Conca for inviting me to Genova. 


\section{C4 in two different directions}
\label{sec:c4twoDirections}

For two-dimensional standard sets $\Delta$ and $\Delta'$, we define their \defn{C4 sum in the first direction} as 
\[
  \Delta +_1 \Delta' := \left\lbrace
  \alpha \in \N^2: 
  p_1(\alpha) < \left| p_2^{-1}\bigl( p_2(\alpha) \bigr) \cap \Delta \right|
  + \left| p_2^{-1}\bigl( p_2(\alpha) \bigr) \cap \Delta' \right|
  \right\rbrace 
\]
and their \defn{C4 sum in the second direction} as 
\[
  \Delta +_2 \Delta' := \left\lbrace
  \alpha \in \N^2: 
  p_2(\alpha) < \left| p_1^{-1}\bigl( p_1(\alpha) \bigr) \cap \Delta \right|
  + \left| p_1^{-1}\bigl( p_1(\alpha) \bigr) \cap \Delta' \right|
  \right\rbrace .
\]
So $\Delta +_1 \Delta'$ arises by arranging the columns of $\Delta$ and of $\Delta'$ in decreasing order from left to right, 
and $\Delta +_2 \Delta'$ arises by arranging the rows of $\Delta$ and of $\Delta'$ in decreasing order from bottom to top. 
Figure \ref{fig:c4bothDirections} shows C4 sums in either direction; 
the picture explains the terminology, reminiscent of the popular two-player game \emph{Connect Four}. 

\begin{center}
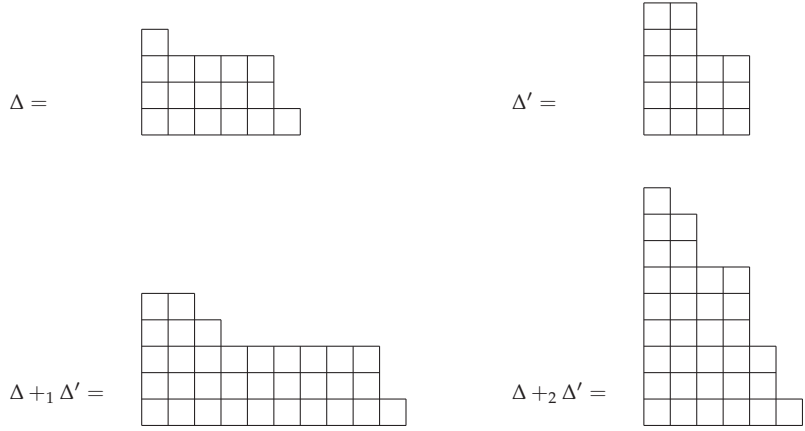
\begin{figure}[ht]
  \begin{picture}(320,170)
    \put(10,130){\footnotesize $\Delta =$}
    \multiput(60,120)(0,10){2}{\line(1,0){60}}
    \multiput(60,140)(0,10){2}{\line(1,0){50}}
    \put(60,160){\line(1,0){10}}
    \multiput(60,120)(10,0){2}{\line(0,1){40}}
    \multiput(80,120)(10,0){4}{\line(0,1){30}}
    \put(120,120){\line(0,1){10}}
    \put(200,130){\footnotesize $\Delta' =$}
    \multiput(250,120)(0,10){4}{\line(1,0){40}}
    \multiput(250,160)(0,10){2}{\line(1,0){20}}
    \multiput(250,120)(10,0){3}{\line(0,1){50}}
    \multiput(280,120)(10,0){2}{\line(0,1){30}}
    \put(10,20){\footnotesize $\Delta +_1 \Delta' =$}
    \multiput(60,10)(0,10){2}{\line(1,0){100}}
    \multiput(60,30)(0,10){2}{\line(1,0){90}}
    \put(60,50){\line(1,0){30}}
    \put(60,60){\line(1,0){20}}
    \multiput(60,10)(10,0){3}{\line(0,1){50}}
    \put(90,10){\line(0,1){40}}
    \multiput(100,10)(10,0){6}{\line(0,1){30}}
    \put(160,10){\line(0,1){10}}
    \put(200,20){\footnotesize $\Delta +_2 \Delta' =$}
    \multiput(250,10)(0,10){2}{\line(1,0){60}}
    \multiput(250,30)(0,10){2}{\line(1,0){50}}
    \multiput(250,50)(0,10){3}{\line(1,0){40}}
    \multiput(250,80)(0,10){2}{\line(1,0){20}}
    \put(250,100){\line(1,0){10}}
    \multiput(250,10)(10,0){2}{\line(0,1){90}}
    \put(270,10){\line(0,1){80}}
    \multiput(280,10)(10,0){2}{\line(0,1){60}}
    \put(300,10){\line(0,1){30}}
    \put(310,10){\line(0,1){10}}
  \end{picture}
  \caption{C4 addition in the first and in the second direction}
\phantomsection\label{fig:c4bothDirections}
\end{figure}
\end{center}

\begin{pro}
\cite[Section 7.2]{nakajima}. 
\phantomsection\label{pro:C4sumFirstDirection}
  For $i = 0, \ldots, m$, let $I_j \subseteq S$ be ideals in $H^{\Delta_j}_\lex(\A^2)$ with distinct supports on the axis $\V(x_2)$. 
  Let 
  \[
    I := \bigcap_{j = 0}^m I_j . 
  \]
  Then $\IN_\lex(I) = M_\Delta$, where 
  \[
    \Delta := \cfourfirst_{j = 0}^m \Delta_j. 
  \]
\end{pro}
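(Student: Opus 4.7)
The plan is to establish $\IN_\lex(I) = M_\Delta$ by matching the filtered dimension $\dim_\CC S/(I + (x_2^{c+1}))$ against the row-partial sums of $\Delta$ for every $c \geq 0$; by differencing, this will force the row-length sequence of the standard set of $\IN_\lex(I)$ to equal that of $\Delta$. The starting point is that the hypothesis of pairwise distinct supports on $\V(x_2)$ makes the ideals $I_j$ pairwise coprime, so $I_j + I_k = S$ for $j \neq k$, and hence the ideals $I_j + (x_2^{c+1})$ are pairwise coprime for every $c$; the Chinese Remainder Theorem then gives $S/\bigcap_j(I_j + (x_2^{c+1})) \cong \prod_j S/(I_j + (x_2^{c+1}))$.

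Next I establish the key identity
\[
  I + (x_2^{c+1}) \;=\; \bigcap_{j=0}^m \bigl(I_j + (x_2^{c+1})\bigr).
\]
The containment $\subseteq$ is immediate. For the reverse, pick CRT idempotents $e_j \in S$ with $e_j \equiv \delta_{jk} \pmod{I_k}$; given $f \in \bigcap_j(I_j + (x_2^{c+1}))$ with decompositions $f = g_j + x_2^{c+1} h_j$ and $g_j \in I_j$, the element $g := \sum_j e_j g_j$ lies in $\bigcap_j I_j = I$, and a short computation using $\sum_j e_j \equiv 1 \pmod{I}$ shows $f - g \in (x_2^{c+1}) + I$. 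Combining this identity with the CRT isomorphism, and with the Gr\"obner basis fact that for any ideal $J$ supported on $\V(x_2)$ with $\IN_\lex(J) = M_{\Delta_J}$ one has $\dim_\CC S/(J + (x_2^{c+1})) = |\{(\alpha,\beta) \in \Delta_J : \beta \leq c\}|$, I obtain
\[
  \dim_\CC S/(I + (x_2^{c+1})) \;=\; \sum_{j=0}^m \sum_{d \leq c} r_j(d) \;=\; \sum_{d \leq c} R(d),
\]
where $r_j(d)$ is the $d$-th row length of $\Delta_j$ and $R(d) = \sum_j r_j(d)$ is the $d$-th row length of $\Delta$.

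Applying the same Gr\"obner basis fact to $I$ itself, which is supported on $\V(x_2)$ as an intersection of ideals supported there, the left-hand side also equals $\sum_{d \leq c} R^*(d)$, where $R^*(d)$ is the $d$-th row length of the standard set of $\IN_\lex(I)$. Matching the two expressions and differencing in $c$ yields $R^*(c) = R(c)$ for all $c$, so $\IN_\lex(I) = M_\Delta$. The main obstacle is the key identity in the middle paragraph: the reverse containment $\bigcap_j(I_j + (x_2^{c+1})) \subseteq I + (x_2^{c+1})$ is not formal, and genuinely needs the CRT-idempotent bookkeeping rather than being a direct consequence of the CRT on the quotient ring. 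The Gr\"obner basis fact invoked above is a separate technical step whose proof relies on the support hypothesis to guarantee that the relevant S-polynomials reduce to zero modulo the augmented lex Gr\"obner basis.
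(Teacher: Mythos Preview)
Your proof is correct and takes a genuinely different route from the paper's. The paper argues constructively: for each outer corner $\alpha$ of $\Delta$ it exhibits an element of $I$ with lex-leading exponent $\alpha$, by invoking Lemma~\ref{lmm:divisibility} to factor each reduced Gr\"obner basis element $f_j \in I_j$ with leading exponent $(\alpha_{1,j},\alpha_2)$ as $f_j = x_2^{\alpha_2} p_j$, and then taking $f := x_2^{\alpha_2}\prod_j p_j \in \bigcap_j I_j$. You instead run a filtered dimension count, using CRT and the identity $I + (x_2^{c+1}) = \bigcap_j (I_j + (x_2^{c+1}))$ to match row-partial sums. Your approach is more structural and avoids building any explicit polynomials; the paper's is more hands-on and yields actual Gr\"obner basis elements of $I$, which is the same template the paper reuses in the harder companion result, Proposition~\ref{pro:C4sumSecondDirection}.

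Both proofs, however, bottom out in the same technical fact. The ``Gr\"obner basis fact'' you defer---that for $J$ supported on $\V(x_2)$ one has $\dim_\CC S/(J + (x_2^{c+1}))$ equal to the number of boxes of $\Delta_J$ at height $\leq c$---is equivalent to saying that the reduced lex Gr\"obner basis of $J$ together with $x_2^{c+1}$ is already a Gr\"obner basis of $J + (x_2^{c+1})$; checking the relevant S-pairs against $x_2^{c+1}$ reduces precisely to the divisibility $x_2^{\alpha_2} \mid f_\alpha$, i.e., to Lemma~\ref{lmm:divisibility}. So you have not bypassed the paper's key lemma, only repackaged where it enters. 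It would strengthen your write-up to make that dependence explicit rather than leaving it as an unproved assertion about S-polynomials.
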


\begin{proof}
  This statement appears without proof in the cited book, so let's briefly verify it here. 
  It suffices to prove the proposition in the case where each $I_j$ is supported in one point on the $x_1$-axis. 
  We call the elements of the minimal generating system of the $\N^2$-module $\N^2 \setminus \Delta$ the \defn{outer corners} of $\Delta$. 
  We have to prove, for each outer corner $\alpha$ of $\Delta$, 
  the existence of an $f \in I$ with initial term $x^\alpha$. 
  Upon defining 
  \[
    \alpha_{1,j} := \left| p_2^{-1}\bigl( p_2(\alpha) \bigr) \cap \Delta_j \right| ,
  \]
  we see that $(\alpha_{1,j},\alpha_2) \in \N^2 \setminus \Delta_j$, 
  which implies the existence of a polynomial
  $f_j \in I_j$ with initial exponent $(\alpha_{1,j},\alpha_2)$. 
  Lemma \ref{lmm:divisibility} implies that among those $f_j$, there is one divisible by $x_2^{\alpha_2}$, so $f_j = x_2^{\alpha_2} p_j$ 
  for some $p_j \in S$ with initial exponent $\alpha_{1,j}$: 
  If $(\alpha_{1,j},\alpha_2)$ is an outer corner of $\Delta_j$, then take the corresponding element of the lexicographic Gr\"obner basis.
  Otherwise, some $(\alpha_{1,j},\alpha_2 - b)$ is an outer corner of $\Delta_j$. 
  In this case, take $x_2^b$ times the corresponding element of the lexicographic Gr\"obner basis. 
  The polynomial
  \[
    f := x_2^{\alpha_2} \prod_{j = 0}^m p_j .
  \]
  then has the initial term $x^\alpha$ since $\alpha_1 = \alpha_{1,0} + \ldots + \alpha_{1,m}$ by definition of $\alpha_{1,j}$. 
\end{proof}

\begin{lmm}
\phantomsection\label{lmm:divisibility}
  Let $I \in H^\Delta_\lex(\A^2)$ be supported on $\V(x_2)$ and $\alpha$ be an outer corner of $\Delta$. 
  If $f_\alpha$ is the element of the reduced lexicographic Gr\"obner basis of $I$ with initial exponent $\alpha$ and trailing exponents in $\Delta$, 
  then $f_\alpha$ is divisible by $x_2^{\alpha_2}$. 
\end{lmm}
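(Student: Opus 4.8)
The plan is to show directly that every monomial occurring in $f_\alpha$ has $x_2$-degree at least $\alpha_2$, proving this first when $I$ is supported at a single point and then bootstrapping to the general case. Two preliminary observations dispose of the easy cases and fix the setup. If $\alpha$ is the topmost outer corner $(0,l+1)$, where $l+1$ is the number of rows of $\Delta$, then $f_\alpha$ has maximal $x_1$-degree $0$, so $f_\alpha\in\CC[x_2]$, hence $f_\alpha\in I\cap\CC[x_2]$; and $I\cap\CC[x_2]=(x_2^{l+1})$, because $\supp(S/I)\subseteq\V(x_2)$ concentrates the scheme-theoretic image of $\Spec(S/I)$ on the $x_2$-axis at the origin, and $l+1$ is the least exponent with $x_2^{l+1}\in M_\Delta$. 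Thus $f_\alpha=x_2^{l+1}$, which settles this corner and provides a base point for later. For outer corners with $\alpha_1=0$ there is nothing to prove, so from now on $\alpha_1\geq1$. Finally, if $\supp(S/I)=\{(a,0)\}$, the automorphism of $S$ given by $x_1\mapsto x_1+a$, $x_2\mapsto x_2$ only adds terms of strictly smaller $x_1$-degree; it therefore fixes every lexicographic leading monomial, carries the reduced Gr\"obner basis of $I$ to that of its image, and preserves the $x_2$-degree of each monomial, so we may assume $I$ is $(x_1,x_2)$-primary.

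For $I$ supported at the origin I would use the third Ellingsrud--Str\o mme torus action, with weight $v=(v_1,v_2)$ satisfying $0<nv_1\leq v_2$: its Bia\l ynicki-Birula cell through the fixed point $M_\Delta$ is exactly $H^{\Delta,\punc}_\lex(\A^2)$, so $\lim_{t\to0}t.I=M_\Delta$. This limit is the Gr\"obner degeneration of $I$ with respect to $v$, hence contains the $v$-initial form $\IN_v(g)$ of every $g\in I$; in particular $\IN_v(f_\alpha)\in M_\Delta$. Since $v_2\geq nv_1$ while every exponent occurring in $f_\alpha$ has both coordinates $<n$, a term of $f_\alpha$ of strictly smaller $x_2$-degree has strictly smaller $v$-weight, so $\IN_v(f_\alpha)$ is supported on the monomials of $f_\alpha$ of minimal $x_2$-degree. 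If $x_2^{\alpha_2}$ failed to divide $f_\alpha$, that minimal $x_2$-degree would be $<\alpha_2$, so the leading monomial $x^\alpha$ would not be among them and $\IN_v(f_\alpha)$ would be a nonzero $\CC$-linear combination of trailing monomials $x^\gamma$ with $\gamma\in\Delta$ --- impossible inside $M_\Delta$, since standard monomials form a $\CC$-basis of $S/M_\Delta$. Hence $x_2^{\alpha_2}\mid f_\alpha$.

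For general $I$ supported on $\V(x_2)$ I would pass to the primary decomposition $I=Q_1\cap\dots\cap Q_m$, where $Q_i$ is $\mathfrak m_{(a_i,0)}$-primary, and feed the single-point result into the construction already used in the proof of Proposition \ref{pro:C4sumFirstDirection}. Writing $\Delta_i=\IN_\lex(Q_i)$ and $\alpha_{1,i}$ for the length of row $\alpha_2$ of $\Delta_i$, the single-point case supplies, for each $i$, an element of $Q_i$ of the form $x_2^{\alpha_2}p_i$ with $p_i\in\CC[x_1]$ monic of degree $\alpha_{1,i}$; then $f:=x_2^{\alpha_2}\prod_i p_i$ lies in $\bigcap_i Q_i=I$, is divisible by $x_2^{\alpha_2}$, and --- once one knows $\sum_i\alpha_{1,i}=\alpha_1$, i.e.\ that row lengths add under $+_1$ --- has leading monomial $x^\alpha$ with all its remaining monomials $x_1^jx_2^{\alpha_2}$, $j<\alpha_1$, lying in $\Delta$. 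By uniqueness of the reduced Gr\"obner basis element with leading monomial $x^\alpha$ this forces $f_\alpha=f$, which is divisible by $x_2^{\alpha_2}$. (To avoid a circular dependence one proves the single-point case of the lemma first, then Proposition \ref{pro:C4sumFirstDirection}, then this general case.)

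I expect the main friction to be in the general case: establishing the additivity $\alpha_1=\sum_i\alpha_{1,i}$ of row lengths under the Connect Four operation $+_1$ (the combinatorial content of Section \ref{sec:c4twoDirections} and of Proposition \ref{pro:C4sumFirstDirection}), together with the bookkeeping that identifies $x_2^{\alpha_2}\prod_i p_i$ with the reduced Gr\"obner basis element. The single-point case is comparatively short, the only real inputs being the identification of the relevant Bia\l ynicki-Birula cell with $H^{\Delta,\punc}_\lex(\A^2)$ and the standard fact that a flat $\mathbb{G}_m$-degeneration of an ideal contains all $v$-initial forms.
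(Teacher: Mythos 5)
Your proof of the single-point case is correct and takes a genuinely different route from the paper's. Where the paper uses a \emph{sequence} of one-parameter degenerations (first with weight $v'=(-1,0)$, then with weights perpendicular to a decreasing chain of slopes), you use \emph{one} degeneration, namely the third Ellingsrud--Str\o mme action with $0<nv_1\leq v_2$, and conclude directly from $\IN_v(f_\alpha)\in M_\Delta$ together with the observation that terms of smaller $x_2$-degree have smaller $v$-weight. This is cleaner, and in fact it proves slightly more than stated: it forces every trailing exponent $\beta$ of $f_\alpha$ to have $\beta_2>\alpha_2$, not merely $\geq\alpha_2$.

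The general case, however, has a genuine gap. You write that ``the single-point case supplies, for each $i$, an element of $Q_i$ of the form $x_2^{\alpha_2}p_i$ with $p_i\in\CC[x_1]$ monic of degree $\alpha_{1,i}$.'' The single-point lemma only gives an element $x_2^{\alpha_2}p_i$ with $p_i\in S$ of lex-leading exponent $(\alpha_{1,i},0)$, and one cannot in general take $p_i\in\CC[x_1]$. Concretely, let $\Delta_i=\{(0,0),(0,1),(0,2),(1,0)\}$ and
\[
Q_i=\bigl(x_1^2+dx_2^2,\ x_1x_2+bx_2^2,\ x_2^3\bigr),\qquad b\neq 0,
\]
a point of $H^{\Delta_i,\punc}_\lex(\A^2)\cong\A^2$. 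With $\alpha_2=1$ and $\alpha_{1,i}=1$, any $x_2p$ with $p=x_1+c\in\CC[x_1]$ satisfies $x_2p-(x_1x_2+bx_2^2)=cx_2-bx_2^2$, a nonzero combination of standard monomials when $b\neq 0$; so $x_2p\notin Q_i$ for every such $p$. Because the $p_i$ genuinely carry $x_2$-terms, the product $f=x_2^{\alpha_2}\prod_ip_i$ can acquire trailing exponents of the form $(\beta_1,\alpha_2+\beta_2)$ with $\beta_2>0$ lying \emph{outside} $\Delta$, so $f$ is not the reduced Gr\"obner basis element $f_\alpha$, and the appeal to uniqueness collapses. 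Reducing $f$ modulo the Gr\"obner basis to produce $f_\alpha$ does not obviously preserve divisibility by $x_2^{\alpha_2}$ without already knowing the lemma for the other outer corners. The paper sidesteps this entirely: it never passes to the primary decomposition of $I$, but degenerates the multi-point ideal $I$ as a whole, reading off the vanishing of the coefficients $c^\alpha_\beta$ directly from the structure of the successive limits. To salvage your route you would need either an induction (say on $\alpha_1$) making the reduction step legitimate, or a different argument that avoids the false claim $p_i\in\CC[x_1]$.
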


\begin{proof}
  The elements of the reduced lexicographic Gr\"obner basis of $I$ take the shape 
  \[
    f_\alpha = x^\alpha + \sum_{\beta \in \Delta, \beta < \alpha} c^\alpha_\beta x^\beta ,
  \] 
  where $\alpha$ runs through the set of outer corners of $\Delta$. 
  Consider the $\CC^\star$-action with a weight $v' \in \Z^2$ such that $v'_1 < 0$ and $v'_2 = 0$. 
  The ideal $I' := \lim_{t \to 0} t.I$ defines a point in $H^{\Delta,\punc}_\lex(\A^2)$. 
  The elements $f'_\alpha$ of its reduced lexicographic Gr\"obner basis of $I'$ 
  arise from the elements 
  of the reduced lexicographic Gr\"obner basis of $I$ by killing all coefficients $c^\alpha_\beta$ such that $\beta_1 < \alpha_1$. 
  Each polynomial $f'_\alpha$ therefore factors as $f'_\alpha = x_1^{\alpha_1} g'_\alpha$ for some $g'_\alpha \in \CC[x_2]$. 
  If $g'_\alpha$ was not divisible by $x_2^{\alpha_2}$, then it would contain a factor $x_2 - \zeta$ for some $\zeta \neq 0$. 
  Since the ideal $I'$ is supported at the origin, a factor $x_2 - \zeta$ is not allowed to show up in the reduced lexicographic Gr\"obner basis. 
  Therefore, $c^\alpha_\beta = 0$ for $\alpha_1 = \beta_1$ and $\beta_2 < \alpha_2$. 
  
  Let $P$ be the set of pairs $(\alpha,\beta)$ consisting of an outer corner $\alpha$ and an element $\beta$ of $\Delta$ 
  such that $c^\alpha_\beta \neq 0$ and $\beta_2 < \alpha_2$. 
  We order the elements of $P$ in the decreasing way according to the slope of the line through $\alpha$ and $\beta$.
  Let $(\alpha'',\beta'')$ be the first element of $P$ and $s''$ the corresponding slope. 
  Consider the $\CC^\star$-action with a weight $v'' \in \Z^2$ perpendicular to $s''$ such that $v''_1 < 0$. 
  The ideal $I'' := \lim_{t \to 0} t.I$ defines a point in $H^{\Delta,\punc}_\lex(\A^2)$. 
  The element $f''_\alpha$ of its reduced lexicographic Gr\"obner basis arises from $f_\alpha$ by killing all $c^\alpha_\beta$ 
  such that $\beta$ is not on the line through $\alpha$ of slope $s''$. 
  If one of the remaining $c^\alpha_\beta$ didn't vanish, then arguments analogous to those we used for $I'$ 
  would lead to a contradiction to $\supp(I'') = \V(x_1,x_2)$. 
  Therefore, $c^\alpha_\beta = 0$ for all $\beta$ sitting on the line through $\alpha$ of slope $s''$. 
  Taking the next element $(\alpha''',\beta''')$ of $P$ 
  shows that $c^\alpha_\beta = 0$ for all $\beta$ sitting on the line through $\alpha$ of slope $s'''$. 
  Proceeding until the last element of $P$ proves the lemma. 
\end{proof}

The next statement is a refined version of the main theorem of \cite{jpaa}. 

\begin{pro}
\phantomsection\label{pro:C4sumSecondDirection}
  For $i = 0, \ldots, m$, let $I_j \subseteq S$ be ideals in $H^{\Delta_j}_\lex(\A^2)$ supported on distinct horizontal lines $\V(x_2 - \lambda_j)$. 
  Let 
  \[
    I := \bigcap_{j = 0}^m I_j . 
  \]
  Then $\IN_\lex(I) = M_\Delta$, where 
  \[
    \Delta := \cfoursecond_{i = 0}^m \Delta_i. 
  \]
\end{pro}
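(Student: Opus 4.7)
The plan is to verify both $\dim_\CC(S/I) = |\Delta|$ and $M_\Delta \subseteq \IN_\lex(I)$; together these force $\IN_\lex(I) = M_\Delta$. The dimension identity is immediate from the Chinese Remainder Theorem: the distinct supports of the $I_j$ make them pairwise coprime, so $S/I \cong \prod_j S/I_j$ and $\dim_\CC(S/I) = \sum_j |\Delta_j| = |\Delta|$.

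For the containment it suffices, for each outer corner $\alpha = (i,k)$ of $\Delta$, to produce $f_\alpha \in I$ with $\IN_\lex(f_\alpha) = x^\alpha$. Writing $h_{j,a}$ for the height of column $a$ in $\Delta_j$, the definition of C4 addition in the second direction gives $k = \sum_j h_{j,i}$. Set
\[
  \eta := \prod_j (x_2 - \lambda_j)^{h_{j,i}} \in \CC[x_2],
\]
a polynomial of $x_2$-degree $k$ with leading coefficient $1$. Since $\prod_{\ell \neq j}(x_2 - \lambda_\ell)^{h_{\ell,i}}$ does not vanish on $\V(x_2-\lambda_j)$, it is a unit modulo $I_j$, so $q\eta \in I_j$ holds iff $q \cdot (x_2 - \lambda_j)^{h_{j,i}} \in I_j$. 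Setting $J_j := I_j : (x_2 - \lambda_j)^{h_{j,i}}$, the task reduces to producing $q \in \bigcap_j J_j$ with $\IN_\lex(q) = x_1^i$; then $f_\alpha := q\eta$ satisfies $\IN_\lex(f_\alpha) = x^\alpha$.

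Lemma \ref{lmm:divisibility} extends, via the substitution $y = x_2 - \lambda_j$, to ideals supported on $\V(x_2-\lambda_j)$: each reduced Gr\"obner basis element of $I_j$ with initial exponent $\beta$ is divisible by $(x_2 - \lambda_j)^{\beta_2}$. Dividing out by $(x_2 - \lambda_j)^{h_{j,i}}$ identifies $J_j$ as an element of $H^{\Delta_j^{h_{j,i}}}_\lex(\A^2)$, where $\Delta_j^{h_{j,i}} := \{(a,b) : (a, b + h_{j,i}) \in \Delta_j\}$ is the staircase obtained by stripping the bottom $h_{j,i}$ rows from $\Delta_j$; in particular its column $i$ is empty, and $J_j \cap \CC[x_2] = \bigl((x_2-\lambda_j)^{H_j - h_{j,i}}\bigr)$ with $H_j$ the height of $\Delta_j$. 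Distinctness of the $\lambda_j$ then makes the Chinese Remainder Theorem in $\CC[x_2]$ applicable, supplying $L_j \in \CC[x_2]$ with $L_j \equiv \delta_{j\ell} \pmod{J_\ell}$ for all $j,\ell$. Let $\psi_j$ be the normal form of $x_1^i$ with respect to the reduced Gr\"obner basis of $J_j$; emptiness of column $i$ of $\Delta_j^{h_{j,i}}$ forces $\psi_j$ to involve only monomials of $x_1$-degree strictly less than $i$. Setting
\[
  q := x_1^i - \sum_j L_j \psi_j,
\]
one checks $q \equiv x_1^i - \psi_\ell \equiv 0 \pmod{J_\ell}$ for each $\ell$, whence $q \in \bigcap_j J_j$; and $\IN_\lex(q) = x_1^i$ because every summand of $\sum_j L_j \psi_j$ has $x_1$-degree below $i$.

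The main obstacle I anticipate is the identification $\IN_\lex(J_j) = M_{\Delta_j^{h_{j,i}}}$: one needs both that the shifted Gr\"obner basis elements of $I_j$ yield enough initial terms of $J_j$, and that the dimension $\dim_\CC(S/J_j)$ matches $|\Delta_j^{h_{j,i}}|$. The extended divisibility lemma does the bulk of the work here; once in place, the CRT interpolation and the assembly of $q$ mirror the spirit of Proposition \ref{pro:C4sumFirstDirection}, with horizontal-line supports replacing common-axis supports and the factor $\prod_j (x_2 - \lambda_j)^{h_{j,i}}$ playing the role previously played by $x_2^{\alpha_2}$.
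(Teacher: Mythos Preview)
Your proof is correct, and it takes a genuinely different route from the paper's. Both arguments reduce to producing, for each outer corner $\alpha=(i,k)$ of $\Delta$, an element of $I$ with lex-initial term $x^\alpha$, and both are interpolation-style constructions; but the mechanics differ.

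The paper never passes to colon ideals. Instead it builds a partition of unity $\epsilon_0+\cdots+\epsilon_m=1$ in $\CC[x_2]$ with $\epsilon_j\in\bigcap_{j'\neq j}I_{j'}$, obtained by raising the Lagrange polynomials $\chi_j$ to a large power $N$ and splitting the multinomial expansion of $(\sum\chi_j)^N=1$ into pieces with enough $(x_2-\lambda_{j'})$-divisibility. It then takes $f_j\in I_j$ with initial exponent $(\alpha_1,\alpha_{2,j})$, writes $f_j=x_1^{\alpha_1}p_j+r_j$ with $p_j\in\CC[x_2]$ monic of degree $\alpha_{2,j}$, and assembles $f=\sum_j f_j\,\epsilon_j\prod_{j'\neq j}p_{j'}$. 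The $\epsilon_j$ put $f$ into every $I_j$, and the $\prod p_{j'}$ factor supplies the missing $x_2$-degree so that the leading part collapses to $x_1^{\alpha_1}\prod_{j'}p_{j'}$.

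Your argument instead pulls out the common factor $\eta=\prod_j(x_2-\lambda_j)^{h_{j,i}}$ at the start, works in the colon ideals $J_j=I_j:(x_2-\lambda_j)^{h_{j,i}}$, and uses CRT in $\CC[x_2]$ directly to get interpolants $L_j$, then corrects $x_1^i$ by its normal forms $\psi_j$. This is more structural and avoids the ``choose $N$ large enough'' step. Note that the full identification $\IN_\lex(J_j)=M_{\Delta_j^{h_{j,i}}}$ you worry about is not actually needed: the argument only uses that $(x_2-\lambda_j)^{H_j-h_{j,i}}\in J_j$ (immediate from $(x_2-\lambda_j)^{H_j}\in I_j$, which is the translated Lemma~\ref{lmm:divisibility} applied to the top outer corner) and that $x_1^i\in\IN_\lex(J_j)$ (which you get by dividing an appropriate $x_1^{i-\beta_1}f_\beta$ by $(x_2-\lambda_j)^{h_{j,i}}$, again using the translated lemma). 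So your ``main obstacle'' can be bypassed entirely.
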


\begin{proof}
  Consider the Lagrange interpolation polynomial
  \[
    \chi_j := \prod_{j' \in \{0, \ldots, m\} \setminus \{j\}} \frac{x_2 - \lambda_{j'}}{\lambda_j - \lambda_{j'}} \in \CC[x_2] .
  \]
  The sum of all $\chi_j$ is the constant polynomial 1, thus also $(\chi_0 + \ldots + \chi_m)^N = 1$ for all $N$. 
  We expand this power using the multinomial theorem, 
  \begin{equation}
  \phantomsection\label{eqn:multinomial}
    \sum_{N_0 + \ldots + N_m = N} {N \choose N_0, \ldots, N_m} 
    \chi_0^{N_0} \ldots \chi_m^{N_m} = 1 .
  \end{equation}
  
  Let $h_{j'} := h(\Delta_{j'})$, then Lemma \ref{lmm:divisibility} implies that the power $(x_2 - \lambda_{j'})^{h_{j'}}$ lies in $I_{j'}$. 
  We define $H := \max \{ h_0, \ldots, h_m \}$. 
  For all $j$, let
  \[
    L_j := \bigl\{ (N_0, \ldots, N_m): N_0 + \ldots + N_m = N, N_j \geq H \bigr\} . 
  \]
  Then for all $(N_0, \ldots, N_m) \in L_j$, 
  the product $\chi_0^{N_0} \ldots \chi_m^{N_m}$ lies in $\cap_{ j' \in \{0, \ldots, m\} \setminus \{j\} } I_{j'}$. 
  Moreover, as long as we choose $N$ large enough, 
  the union of all $L_j$ is the full indexing set of the sum in \eqref{eqn:multinomial}. 
  However, $L_j$ and $L_{j'}$ may overlap for $j \neq j'$. 
  We therefore pass to a patching of the indexing set by $L'_j \subseteq L_j$ which don't overlap. 
  Then 
  \[
    \epsilon_j := \sum_{(N_0, \ldots, N_m) \in L'_j} {N \choose N_0, \ldots, N_m} \chi_0^{N_0} \ldots \chi_m^{N_m} \in \bigcap_{ j' \in \{0, \ldots, m\} \setminus \{j\} } I_{j'} ,
  \]
  and $\epsilon_0 + \ldots + \epsilon_m = 1$. 
  
  For proving that $\Delta$ is the standard set of $I$, 
  we prove, for each outer corner $\alpha$ of $\Delta$, the existence of an $f_\alpha \in I$
  with initial exponent $\alpha$. 
  Upon defining 
  \[
    \alpha_{2,j} := \left| p_1^{-1}\bigl( p_1(\alpha) \bigr) \cap \Delta_j \right| ,
  \]
  we see that $(\alpha_1,\alpha_{2,j}) \in \N^2 \setminus \Delta_j$, which implies the existence of a polynomial
  $f_j \in I_j$ with initial exponent $(\alpha_1,\alpha_{2,j})$. 
  We write this polynomial as 
  \[
    f_j = x_1^{\alpha_1} p_j + r_j ,
  \]
  where $p_j \in \CC[x_2]$ is monic with initial exponent $\alpha_{2,j}$
  and all terms of $r_j \in S$ are lexicographically smaller than $x_1^{\alpha_1}$. 
  In other words, the powers of $x_1$ appearing in $r_j$ do not exceed $x_1^{\alpha_1 - 1}$. 
  We define 
  \[
    f := \sum_{j \in \{0, \ldots, m\}} f_j \epsilon_j \prod_{j' \in \{0, \ldots, m\} \setminus \{j\}} p_{j'} .
  \]
  Then $f \in I$ since $f_j \in I_j$ and $\epsilon_j \in \cap_{ j' \in \{0, \ldots, m\} \setminus \{j\} } I_{j'}$. 
  Moreover, $f$ expands into
  \[
    f = \sum_{j \in \{0, \ldots, m\}} x_1^{\alpha_1} \epsilon_j \prod_{j' \in \{0, \ldots, m\}} p_{j'}
    + \sum_{j \in \{0, \ldots, m\}} r_j \epsilon_j \prod_{j' \in \{0, \ldots, m\} \setminus \{j\}} p_{j'} .
  \]
  The first sum reduces to $x_1^{\alpha_1} \prod_{j' \in \{0, \ldots, m\}} p_{j'}$, 
  whose initial exponent is $\alpha$, since $\alpha_2 = \alpha_{2,0} + \ldots + \alpha_{2,m}$ by definition of $\alpha_{2,j}$. 
  All terms of the second sum are lexicographically smaller than $x_1^{\alpha_1}$. 
  The polynomial $f$ therefore has the desired properties. 
\end{proof}


\section{Incidences on the \'etale part}
\phantomsection\label{sec:etaleIncidences}

Before proving Theorem \ref{thm:combinatorialDuality} (i), two preliminary remarks are in order. 
The first one is geometric. 
Remember from the Introduction that $H^{n,\et}(\A^2)$ is the open locus of $H^n(\A^2)$ 
on which the Hilbert-Chow morphism $\alpha_n$ is an isomorphism. 
The image of $H^{n,\et}(\A^2)$ under $\alpha_n$ is the symmetric product 
\[
  (\A^2)^{(n)}_\star := \bigl( (\A^2)^n \setminus \Lambda \bigr) / S_n
\]
where $\Lambda$ is the \defn{large diagonal}, i.e., the locus where at least two points in $(\A^2)^n$ coincide, 
and $S_n$ is the symmetric group acting by permutation of the factors \cite[Section 2]{bertin}. 
The identification of points in $H^{n,\et}(\A^2)$ and $(\A^2)^{(n)}_\star$ sends an ideal $I$ to the set $\supp(S/I)$, whose cardinality is $n$. 
We therefore always think of points in $H^{n,\et}(\A^2)$ as collections of $n$ reduced points in $\A^2$. 

The second remark is combinatorial. 
We reformulate the partial ordering $\leq_\et$ in terms of C4 sums. 
A standard set $\Delta$ is the C4 sum in the second direction of its rows, 
\[
  \Delta = \cfoursecond_{\Delta_i \in \rows(\Delta)} \Delta_i ,
\]
where $\rows(\Delta)$ denotes the multiset of rows of $\Delta$. 
Definition \ref{dfn:twoOrderings} then says that $\Delta \leq_\et \Delta'$ if, and only if, 
there exists a partition $\rows(\Delta) = R_0 \coprod \ldots \coprod R_l$ such that 
\[
  \Delta' = \cfoursecond_{ j \in \{0, \ldots, l\} } \Bigl( \cfourfirst_{\Delta_i \in R_i} \Delta_i \Bigr) . 
\]
The rows of $\Delta'$ appear in the parentheses in the last displayed equation; they are just the horizontal concatenation of certain rows of $\Delta$. 
We therefore also express the inequality $\Delta \leq_\et \Delta'$ by saying that \defn{$\Delta'$ arises from $\Delta$ by merging rows}. 

\begin{proof}[Proof of Theorem \ref{thm:combinatorialDuality} (i)]
  Let $\Delta \in \st_n$ and $\rows(\Delta) := \{ \Delta_0, \ldots, \Delta_l \}$ be the multiset of its rows. 
  Let $I \subseteq S$ be an ideal defining a point in $H^{n,\et}(\A^2)$. 
  We write $A := \supp(S/I) \subseteq \A^2$ for the set of $n$ points corresponding to $I$. 
  Propositions \ref{pro:C4sumFirstDirection} and \ref{pro:C4sumSecondDirection} imply that $I$ defines a point in $H^\Delta_\lex(\A^2)$ if, and only if, 
  \begin{itemize}
    \item $|\Delta_0|$ points of $A$ sit on a horizontal line $\V(x_2 - \lambda_0)$, 
    \item $|\Delta_1|$ points of $A$ sit on a horizontal line $\V(x_2 - \lambda_1)$, 
    \item etc., and 
    \item $|\Delta_l|$ points of $A$ sit on a horizontal line $\V(x_2 - \lambda_l)$, 
  \end{itemize}
  and $\lambda_i \neq \lambda_j$ for $i \neq j$. Figure \ref{fig:points} shows an example of a configuration of points defining an ideal $I \in H^\Delta_\lex(\A^2)$. 
  The full moduli space $H^\Delta_\lex(\A^2)$ is the space of configurations of $n$ points in which 
  the horizontal lines may move freely along the $x_2$-axis as long as they don't collide, 
  and the points on each horizontal line $\V(x_2 - \lambda_j)$ may move freely along this line as long as they don't collide. 
  
  \begin{center}
  \begin{figure}[ht]
    \begin{picture}(300,160)
      \put(10,25){\footnotesize $\Delta =$}
      \multiput(40,15)(0,10){3}{\line(1,0){50}}
      \put(40,45){\line(1,0){20}}
      \put(40,55){\line(1,0){10}}
      \multiput(40,15)(10,0){2}{\line(0,1){40}}
      \put(60,15){\line(0,1){30}}
      \multiput(70,15)(10,0){3}{\line(0,1){20}}
      \put(140,15){\line(1,0){150}}
      \put(145,10){\line(0,1){140}}
      \put(140,35){\line(1,0){150}}
      \put(140,70){\line(1,0){150}}
      \put(140,85){\line(1,0){150}}
      \put(140,130){\line(1,0){150}}
      \put(127,32){\footnotesize $\lambda_0$}
      \put(127,67){\footnotesize $\lambda_1$}
      \put(127,82){\footnotesize $\lambda_2$}
      \put(127,127){\footnotesize $\lambda_3$}
      \multiput(175,35)(14,0){2}{\circle*{2}}
      \multiput(200,35)(23,0){2}{\circle*{2}}
      \put(280,35){\circle*{2}}
      \multiput(190,70)(65,0){2}{\circle*{2}}
      \put(240,85){\circle*{2}}
      \multiput(190,130)(15,0){2}{\circle*{2}}
      \multiput(230,130)(23,0){2}{\circle*{2}}
      \put(214,130){\circle*{2}}
  \end{picture}
  \caption{A configuration of points defining an ideal $I \in H^\Delta_\lex(\A^2)$}
  \phantomsection\label{fig:points}
  \end{figure}
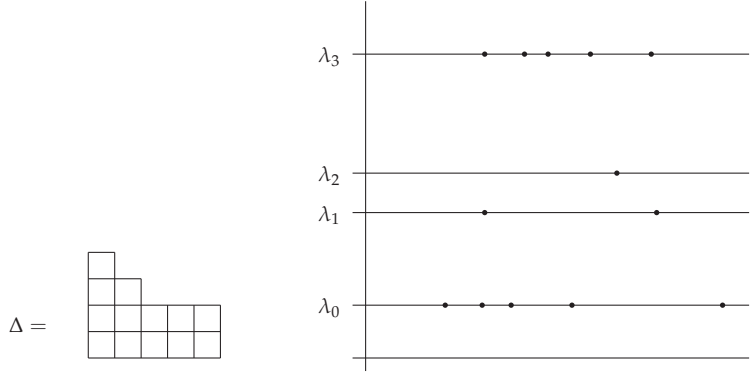
  \end{center}
  
  Within the moduli space $H^n(\A^2)$, however, points have more freedom to move. 
  Consider a configuration $A$ of $n$ reduced points defining a point in $H^{\Delta,\et}(\A^2)$ 
  such the one from Figure \ref{fig:points}. 
  Understanding to which configuration of $n$ reduced points the given configuration $A$ may degenerate 
  \emph{without leaving $H^{\Delta,\et}(\A^2)$} amounts to determining the closure of $H^\Delta_\lex(\A^2)$ in $H^{n,\et}(\A^2)$. 
  Here is how to determine that degeneration: 
  First off, none of the points sitting on a horizontal line $\V(x_2 - \lambda_j)$ may leave the line they sit on. 
  Moreover, none of the points may collide with each other. 
  However, it may and will happen that different horizontal lines merge into one. 
  Of course if this happens, the points sitting on the respective lines will still stay away from each other. 
  The combinatorial shadow of the merging process of horizontal lines in $\A^2$ 
  is encoded in the partial ordering $\leq_\et$, as described earlier in the present section. 
  What we have just proven is the inclusion 
  \[
    \overline{H^\Delta(\A^2)} \subseteq \coprod_{\Delta \leq_\et \Delta'} H^{\Delta'}_\lex(\A^2) .
  \]
  As for the converse inclusion, it's easy to see that each configuration $A'$ of points corresponding to a point in $H^{\Delta'}_\lex(\A^2)$ 
  has arbitrarily close approximations by configurations $A$ of points corresponding to points in $H^\Delta_\lex(\A^2)$. 
\end{proof}


\section{Incidences on the punctual part}
\phantomsection\label{sec:punctualIncidences}

We also start this section, in which we prove Theorem \ref{thm:combinatorialDuality} (ii), with two preliminary remarks. 
The first is once more geometric. 
Remember from the Introduction that Ellingsrud and Str\o mme defined three actions of $\CC^\star$ on $S$ which 
induce Bia\l ynicki-Birula decompositions of three types of Hilbert schemes. 
We have already discussed the first and the third type of action. 
The second type uses a weight $v \in \Z^2$ such that $v_1 < 0 < v_2$. 
The fixed points of this action are again monomial ideals $M_\Delta$ for $\Delta \in \st_n$. 
The limit $\lim_{t \to 0} t.I$ of an ideal defining a point in $H^n(\A^2)$ exists in $S$ only if $I$ is supported on the axis $\V(x_2)$. 
It therefore only makes sense to consider this action on 
\[
  H^{n,\lin}(\A^2) := \bigl\{ \text{ideals } I \subseteq S : \dim(S / I) = n, \, \supp(S / I) = \V(x_2) \bigr\}
\]
The Bia\l ynicki-Birula decomposition then reads 
\[
  H^{n,\lin}(\A^2) = \coprod_{\Delta \in \st_n} H^{\Delta,\lin}_\lex(\A^2) ,
\]
where 
\[
  H^{\Delta,\lin}_\lex(\A^2) := H^\Delta_\lex(\A^2) \cap H^{n,\lin}_\lex(\A^2) .
\]
Note that we have implicitly been using schemes like $H^{\Delta,\lin}_\lex(\A^2)$ 
in Proposition \ref{pro:C4sumFirstDirection} and Lemma \ref{lmm:divisibility} above. 
Ellingsrud and Str\o mme, with a later correction by Huibregtse, 
determined the dimensions of Bia\l ynicki-Birula cells $H^\Delta(\A^2)$, $H^{\Delta,\lin}(\A^2)$ and $H^{\Delta,\punc}(\A^2)$, 
and also related the three types of cells to each other. 
Conca and Valla proved the same result using a different approach. 

\begin{thm}\cite{esBetti, esCells, huibregtseEllingsrud, hilbertBurchMatrices}
\label{thm:affineCells}
  Let $p_1, p_2 : \N^2 \to \N$ be the two projections, $w(\Delta) := |p_1(\Delta)|$ the \defn{width} 
  and $h(\Delta) := |p_2(\Delta)|$ the \defn{height} of $\Delta \in \st_n$. 
  Then 
  \begin{equation*}
    \begin{split}
      H^{\Delta}_\lex(\A^2) & \cong \A^{\left|\Delta\right| + h(\Delta)} , \\
      H^{\Delta,\lin}_\lex(\A^2) & \cong \A^{\left|\Delta\right|} , \\
      H^{\Delta,\punc}_\lex(\A^2) & \cong \A^{\left|\Delta\right| - w(\Delta)} .
    \end{split}
  \end{equation*}
  Moreover, the coefficients of the reduced lexicographic Gr\"obner basis of the universal family over $H^{\Delta}_\lex(\A^2)$, 
  which take the shape
  \[
    f_\alpha = \sum_{\substack{\beta \in \Delta \\ \beta < \alpha}} c^\alpha_\beta x^\beta ,
  \]
  for all outer corners $\alpha$ of $\Delta$, are polynomials in the coordinates of $\A^{\left|\Delta\right| + h(\Delta)}$, 
  and the two closed immersions 
  \[
    H^{\Delta,\punc}_\lex(\A^2) \subseteq H^{\Delta,\lin}_\lex(\A^2) \subseteq H^{\Delta}_\lex(\A^2)
  \]
  are given by passing to certain coordinate subspaces in $\A^{\left|\Delta\right| + h(\Delta)}$. 
\end{thm}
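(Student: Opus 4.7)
The plan is to give each cell explicit coordinates via the reduced lexicographic Gr\"obner basis and then to cut out the parameter space using Buchberger's criterion. Fix $\Delta \in \st_n$. Every $I \in H^\Delta_\lex(\A^2)$ admits a unique reduced lexicographic Gr\"obner basis $\{f_\alpha\}$ indexed by the outer corners $\alpha$ of $\Delta$, of the shape displayed in the theorem; collecting the coefficients $c^\alpha_\beta$ embeds $H^\Delta_\lex(\A^2)$ into an affine space whose dimension is $\sum_\alpha |\{\beta \in \Delta : \beta <_\lex \alpha\}|$. Buchberger's criterion then requires every S-polynomial $S(f_\alpha, f_{\alpha'})$ to reduce to zero modulo $\{f_\alpha\}$, and each such reduction yields a polynomial identity among the $c^\alpha_\beta$. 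The essential claim is that this system admits a triangular resolution: a distinguished subset $\mathcal{F}$ of the $c^\alpha_\beta$, of cardinality $|\Delta|+h(\Delta)$, can be chosen freely, and each remaining coefficient becomes a polynomial function of those in $\mathcal{F}$. Granting this, $H^\Delta_\lex(\A^2) \cong \A^{|\Delta|+h(\Delta)}$ and the universal family has polynomial coefficients in the free coordinates, proving the polynomial-dependence half of the theorem.

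The two subscheme statements then follow by translating support conditions into coefficient vanishings. By Lemma \ref{lmm:divisibility}, for $I$ supported on $\V(x_2)$ each $f_\alpha$ is divisible by $x_2^{\alpha_2}$, which forces the vanishing of all $c^\alpha_\beta$ with $\beta_2 < \alpha_2$; running the torus-limit argument of that proof with the roles of $x_1$ and $x_2$ interchanged provides the further vanishings needed to enforce support at the origin. In terms of the free coordinates $\mathcal{F}$ these conditions cut out a coordinate subspace of codimension $h(\Delta)$ in the linear case (morally one parameter per row of $\Delta$, encoding motion along $\V(x_2)$), and an additional codimension $w(\Delta)$ in the punctual case (one parameter per column of $\Delta$, encoding motion along $\V(x_1)$). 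This simultaneously yields the dimensions $|\Delta|$ and $|\Delta|-w(\Delta)$ and exhibits the two inclusions $H^{\Delta,\punc}_\lex(\A^2) \subseteq H^{\Delta,\lin}_\lex(\A^2) \subseteq H^\Delta_\lex(\A^2)$ as inclusions of coordinate subspaces.

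The main obstacle is the triangularity claim in the first paragraph. Two complementary routes handle it. Conca and Valla invoke the Hilbert--Burch theorem to realize every $I \in H^\Delta_\lex(\A^2)$ as the maximal-minor ideal of an explicit $h(\Delta) \times (h(\Delta)+1)$ matrix whose entries are essentially the free coordinates, producing the polynomial formulas directly. Alternatively, one can invoke the Bia\l ynicki--Birula theorem for smooth varieties: $H^n(\A^2)$ is smooth and our $\CC^\star$-action has only isolated fixed points, hence each BB cell is automatically isomorphic to an affine space whose dimension equals the number of positive-weight summands in Haiman's arrow description of $T_{M_\Delta} H^n(\A^2)$; this count delivers $|\Delta|+h(\Delta)$, and the parallel tangent-space counts for the actions on $H^{n,\lin}(\A^2)$ and $H^{n,\punc}(\A^2)$ supply the other two dimensions.
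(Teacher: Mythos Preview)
The paper does not supply its own proof of this theorem; it is stated with citations to Ellingsrud--Str\o mme, Huibregtse, and Conca--Valla and then used as a black box. So there is nothing to compare against directly, and your proposal should be judged on its own merits.

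Your Hilbert--Burch route is sound and is exactly what Conca--Valla do in \cite{hilbertBurchMatrices}: the canonical Hilbert--Burch matrix for $I \in H^\Delta_\lex(\A^2)$ has a prescribed shape, its free entries furnish the $|\Delta|+h(\Delta)$ coordinates, the Gr\"obner basis elements are its maximal minors (hence polynomial in those coordinates), and the support conditions on $\V(x_2)$ and on the origin kill exactly the right entries to give the coordinate-subspace inclusions. That argument delivers everything in the statement at once.

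Your alternative Bia\l ynicki--Birula route, however, has a genuine gap. The smooth BB theorem applies to $H^n(\A^2)$ and yields $H^\Delta_\lex(\A^2) \cong \A^{|\Delta|+h(\Delta)}$ from Haiman's arrow count, as you say. But $H^{n,\lin}(\A^2)$ and $H^{n,\punc}(\A^2)$ are \emph{not} smooth in general, so the smooth BB theorem does not apply to them, and ``parallel tangent-space counts'' do not automatically give you either that the cells are affine spaces or what their dimensions are. This is precisely the point of the Huibregtse reference \cite{huibregtseEllingsrud}: Ellingsrud and Str\o mme's original argument along these lines contained an error for $H^{n,\punc}$, and Huibregtse's correction required substantial additional work. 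Likewise, your appeal to Lemma~\ref{lmm:divisibility} and its ``role-reversed'' analogue tells you which coefficients $c^\alpha_\beta$ must vanish, but it does not by itself tell you how many of the \emph{free} parameters in $\mathcal{F}$ are killed, nor that the vanishing locus is a coordinate subspace---that conclusion really does need the explicit Hilbert--Burch parametrization. So: keep the first route, and drop or substantially rework the second.
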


The second remark is a reformulation of the partial ordering $\leq_\punc$ in terms of C4 sums, 
analogous to the reformulation of $\leq_\et$ in the previous section. 
A standard set $\Delta$ is the C4 sum in the first direction of its columns, 
\[
  \Delta = \cfourfirst_{\Delta_j \in \cols(\Delta)} \Delta_j .
\]
Then $\Delta \leq_\punc \Delta'$ if, and only if, each column $\Delta_j$ vertically breaks into one or more pieces
\[
  \Delta_j = \cfoursecond_{ i \in C(\Delta_j) } \Delta'_i ,
\]
for some indexing set $C(\Delta_j)$, such that $\Delta'$ is the C4 sum in the first direction of all pieces of all columns, 
\[
  \Delta' = \cfourfirst_{\substack{\Delta_j \in \cols(\Delta), \\ i \in C(\Delta_j)}} \Delta'_i . 
\]
We therefore also express the inequality $\Delta \leq_\punc \Delta'$ by saying that \defn{$\Delta'$ arises from $\Delta$ by breaking rows apart}. 

\begin{proof}[Proof of Theorem \ref{thm:combinatorialDuality} (ii)]
  Throughout the proof, the standard set $\Delta$ will be fixed. 
  We will consider various $\Delta'$ such that $\Delta <_\punc \Delta'$, 
  always denoting the respective multisets of columns by $\cols(\Delta) = \{\Delta_0, \ldots, \Delta_{w-1}\}$ and 
  $\cols(\Delta') = \{\Delta'_0, \ldots, \Delta'_{w'-1}\}$. 
  We first take a specific $\Delta'$, namely, 
  one for which only one column $\Delta_{j_0}$ of $\Delta$ splits into no more than two columns 
  $\Delta'_{j'_0}$ and $\Delta'_{j'_1}$ of $\Delta'$ and all 
  other columns remain unchanged, so 
  \[
    \cols(\Delta) \setminus \bigl\{ \Delta_{j_0} \bigr\} = \cols(\Delta') \setminus \bigl\{ \Delta'_{j'_0}, \Delta'_{j'_1} \bigr\} .
  \]
  In particular, $w' = w+1$. 
  Let $I' \subseteq S$ be a point in $H^{\Delta',\punc}_\lex(\A^2)$. 
  We will show that $I'$ lies in the closure of $H^{\Delta,\punc}_\lex(\A^2)$. 

  Theorem \ref{thm:affineCells} implies that $H^{\Delta',\lin}_\lex(\A^2)$ is an affine space of dimension $n$ 
  and contains as an open and dense subscheme the moduli space $X^{\Delta'}$ of ideals splitting into 
  \[
    I'' = \bigcap_{j = 0}^{w'-1} I''_j ,
  \]
  where each $I''_j$ lies in $H^{\Delta'_j,\lin}_\lex(\A^2)$ and is supported at $\V(x_1 + c_{j,0},x_2)$ such that $c_{j,0} \neq c_{k,0}$ for $j \neq k$.
  Thus $I''_j$ is given by its reduced lexicographic Gr\"obner basis,
  \[
    I''_j = \langle x_1 + \sum_{b \in \Delta'_j} c_{j,b} x_2^b, x_2^{|\Delta'_j|} \rangle , 
  \]
  so $\supp(I''_j) = \V(x_1 + c_{j,0})$, such that $c_{j,0} \neq c_{k,0}$ for $j \neq k$. 
  In other words, $\Spec(S/I'')$ splits into $w'$ ``tall'' rather than ``fat'' points, 
  the tallness of the factor $\Spec(S/I''_j)$ being the column $\Delta'_j$ of $\Delta'$
  (The same open and dense subscheme of $H^{\Delta',\lin}_\lex(\A^2)$ is used in Section 7.2 of \cite{nakajima}.)
  The middle picture in Figure \ref{fig:tallPoints} shows $\V(I'')$ for an ideal $I''$ 
  in the dense and open subscheme $X^{\Delta'}$ of $H^{\Delta',\lin}_\lex(\A^2)$. 
  
  \begin{center}
  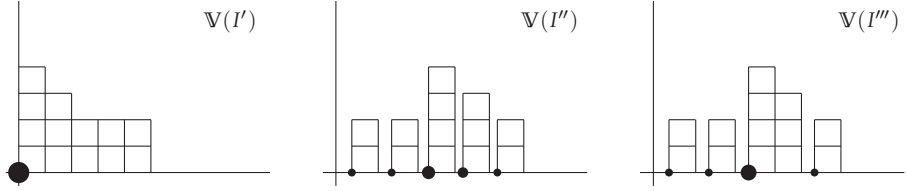
\begin{figure}[ht]
    \begin{picture}(360,90)
      \put(10,15){\line(1,0){100}}
      \put(15,10){\line(0,1){70}}
      \put(85,70){\footnotesize $\V(I')$}
      \multiput(15,25)(0,10){2}{\line(1,0){50}}
      \put(15,45){\line(1,0){20}}
      \put(15,55){\line(1,0){10}}
      \put(25,15){\line(0,1){40}}
      \put(35,15){\line(0,1){30}}
      \multiput(45,15)(10,0){3}{\line(0,1){20}}
      \put(15,15){\circle*{8}}
      \put(130,15){\line(1,0){100}}
      \put(135,10){\line(0,1){70}}
      \put(205,70){\footnotesize $\V(I'')$}
      \put(138,15){
        \multiput(3,0)(10,0){2}{\line(0,1){20}}
        \multiput(3,10)(0,10){2}{\line(1,0){10}}
        \multiput(18,0)(10,0){2}{\line(0,1){20}}
        \multiput(18,10)(0,10){2}{\line(1,0){10}}
        \multiput(32,0)(10,0){2}{\line(0,1){40}}
        \multiput(32,10)(0,10){4}{\line(1,0){10}}
        \multiput(45,0)(10,0){2}{\line(0,1){30}}
        \multiput(45,10)(0,10){3}{\line(1,0){10}}
        \multiput(58,0)(10,0){2}{\line(0,1){20}}
        \multiput(58,10)(0,10){2}{\line(1,0){10}}
        \multiput(3,0)(15,0){2}{\circle*{3}}
        \put(32,0){\circle*{5}}
        \put(45,0){\circle*{4}}
        \put(58,0){\circle*{3}}
      }
      \put(250,15){\line(1,0){100}}
      \put(255,10){\line(0,1){70}}
      \put(325,70){\footnotesize $\V(I''')$}
      \put(258,15){
        \multiput(3,0)(10,0){2}{\line(0,1){20}}
        \multiput(3,10)(0,10){2}{\line(1,0){10}}
        \multiput(18,0)(10,0){2}{\line(0,1){20}}
        \multiput(18,10)(0,10){2}{\line(1,0){10}}
        \multiput(33,0)(10,0){2}{\line(0,1){40}}
        \multiput(33,10)(0,10){3}{\line(1,0){20}}
        \put(33,40){\line(1,0){10}}
        \put(53,0){\line(0,1){30}}
        \multiput(58,0)(10,0){2}{\line(0,1){20}}
        \multiput(58,10)(0,10){2}{\line(1,0){10}}
        \multiput(3,0)(15,0){2}{\circle*{3}}
        \put(33,0){\circle*{6}}
        \put(58,0){\circle*{3}}
      }
  \end{picture}
  \caption{A point in $H^{n,\punc}(\A^2)$ and two approximations in $H^{n,\lin}(\A^2)$}
  \phantomsection\label{fig:tallPoints}
  \end{figure}
  \end{center}

  The ideal $I'$ defines a point in the closed subscheme $H^{\Delta',\punc}_\lex(\A^2)$ of $H^{\Delta',\lin}_\lex(\A^2)$. 
  A point $I''$ in the open and dense subscheme $X^{\Delta'}$ of $H^{\Delta',\lin}_\lex(\A^2)$ 
  can therefore be chosen arbitrarily close to $I'$. 
  The two factors $I''_{j_0}$ and $I''_{j_1}$ of $I''$, 
  whose corresponding schemes are tall points with standard sets $\Delta'_{j_0}$ and $\Delta'_{j_1}$, respectively, are of particular interest to us. 
  The reduced lexicographic Gr\"obner basis of $I''$ contains one polynomial $f_{(w',0)}$ with initial exponent $(w',0)$. 
  The zeros of $f_{(w',0)}(x_1,0)$ are the $w'$ distinct points in $\supp(I'') \subseteq \V(x_2)$. 
  Those coordinates of $H^{\Delta',\lin}_\lex(\A^2) \cong \A^n$ that show up in the polynomial terms for the coefficients of $f_{(w',0)}(x_1,0)$ 
  can be adjusted such that the factors $I''_{j_0}$ and $I''_{j_1}$ of $I''$ collapse into an ideal $I'''_{j_0}$ supported in one point on the $x_1$-axis, 
  whereas all other factors $I''_j$ get transformed into factors $I'''_j$ which are separated from each other and from $I'''_{j_0}$.
  This can be done such that the lexicographic Gr\"obner deformation of ideal $I'''_{j_0}$ has the standard set $\Delta'_{j_0} +_1 \Delta'_{j_1}$. 
  The ideal 
  \[
    I''' := I'''_{j_0} \cap \Bigl( \bigcap_{j \neq j_0} I'''_j \Bigr). 
  \]
  is a deformation of $I''$ which can be chosen arbitrarily close to $I''$. 
  The schemes defined by ideals $I'$, $I''$ and $I'''$ are illustrated in Figure \ref{fig:tallPoints}. 
  What the picture doesn't appropriately indicate is the location of the points in $\supp(I'')$ and $\supp(I''')$. 
  These points should be thought of as sitting arbitrarily close to the origin. 
  
  Let $n_0 := |\Delta_{j_0}| = |\Delta'_{j_0}| + |\Delta'_{j_1}|$, and let $z_0 \in \V(x_2)$ be the support of $I'''_{j_0}$. 
  The $z_0$-translated version of the punctual Hilbert scheme is 
  \[
    H^{n_0,\punc}_{z_0}(\A^2) := \bigl\{ \text{ideals } I \subseteq S : \dim(S / I) = n_0, \, \supp(S / I) = z_0 \bigr\}
  \]
  and for each standard set $\Delta''$ of cardinality $n_0$, the $z_0$-translated version of the corresponding Bia\l ynicki-Birula cell is 
  \[
    H^{\Delta_0,\punc}_{\lex,z_0}(\A^2) := H^{\Delta_0,\lin}_\lex(\A^2) \cap H^{n_0,\punc}_{z_0}(\A^2) .
  \]
  The $z_0$-translated version of Lemma \ref{lmm:simplestPunctualClosure} below implies that 
  the ideal $I'''_{j_0}$ lies in the closure of $H^{\Delta_{j_0},\punc}_{\lex, z_0}(\A^2)$ in $H^{n_0,\punc}_{z_0}(\A^2)$. 
  So there exists an ideal $I^{(4)}_{j_0} \in H^{\Delta_{j_0},\punc}_{\lex, z_0}(\A^2)$ arbitrarily close to the ideal $I'''_{j_0}$. 
  We use this ideal for defining 
  \[
    I^{(4)} := I^{(4)}_{j_0} \cap \Bigl( \bigcap_{j \neq j_0} I'''_j \Bigr) .
  \]
  Then $I^{(4)}$ lies in $H^{\Delta,\lin}_\lex(\A^2)$ by Lemma \ref{pro:C4sumFirstDirection}. 
  
  We now have all tools for showing that $I'$ lies in the closure of $H^{\Delta,\punc}_\lex(\A^2)$. 
  Take an open neighborhood $U \subseteq H^{n,\punc}(\A^2)$ of $I'$. 
  We can find an open $W \subseteq H^{n,\lin}(\A^2)$ such that $U = W \cap H^{n,\punc}(\A^2)$. 
  By choosing $I''$ close enough to $I'$, $I'''$ close enough to $I''$, 
  and $I^{(4)}$ close enough to $I'''$, we can have all these ideals lie in $W$. 
  The ideal $I^{(4)}$ is not supported in the origin, but rather in $w$ points on the $x_1$-axis. 
  Its reduced lexicographic Gr\"obner basis contains one polynomial $f_{(w,0)}$ with initial exponent $(w,0)$. 
  The zeros of $f_{(w,0)}(x_1,0)$ are the $w$ points in $\supp(S/I^{(4)})$. 
  By suitably adjusting some of the coordinates of $H^{\Delta',\lin}_\lex(\A^2) \cong \A^n$ 
  that show up in the polynomial terms for the coefficients of $f_{(w,0)}(x_1,0)$, 
  we kill all trailing terms of $f_{(w,0)}(x_1,0)$. 
  We perform the same adjustment to all elements of the lexicographic Gr\"obner basis of $I^{(4)}$ which, 
  as we remember from Theorem \ref{thm:affineCells}, are polynomials in the coordinates of $H^{\Delta',\lin}_\lex(\A^2) \cong \A^n$. 
  The ideal $I$ generated by these polynomials still lies in $H^{\Delta',\lin}_\lex(\A^2)$, 
  and is supported in the origin by construction. 
  If $I^{(4)}$ is close enough to $I'$, then the $w$ zeros of $f_{(w,0)}(x_1,0)$ are arbitrarily close to the origin. 
  The above-defined readjustment of the coefficients of $f_{(w,0)}(x_1,0)$ therefore leaves the coordinates of $H^{\Delta',\lin}_\lex(\A^2) \cong \A^n$
  arbitrarily close to their original values. 
  The ideal $I$ then still lies in the open neighborhood $W$ of $I'$. 
  It therefore lies in $U = W \cap H^{n,\punc}(\A^2)$. 
  
  We have thus proved the inclusion 
  \[
    H^{\Delta',\punc}_\lex(\A^2) \subseteq \overline{H^{\Delta,\punc}_\lex(\A^2)} 
  \]
  for the specific $\Delta'$ defined above. 
  The same inclusion follows for all $\Delta'$ such that $\Delta <_\punc \Delta'$ by induction over the poset $\st_n$. 
  This establishes the first half of the proof. 
  
  As for the second half, we first consider any standard set $\Delta'$ of cardinality $n$, 
  and introduce a method for finding out whether or not $\Delta <_\punc \Delta'$. 
  We iteratively define multisets 
  \begin{itemize}
    \item $C_0$ (``to be checked if splittable''), 
    \item $C_1$ (``splits''), 
    \item $C_2$ (``isn't splittable''), 
    \item $C'_0$ (``to be checked if arises as a split product''), 
    \item $C'_1$ (``arises as a split product''), and 
    \item $C'_2$ (``doesn't arise as a split product'') 
  \end{itemize}
  by the algorithm from Figure \ref{figure:alg}. 
  
  \begin{figure}[h]
  \begin{algorithmic}
    \State $C_0 \gets \cols(\Delta)$, $C_1 \gets \emptyset$, $C_2 \gets \emptyset$
    \State $C'_0 \gets \cols(\Delta')$, $C'_1 \gets \emptyset$, $C'_2 \gets \emptyset$
    \State $G \gets \text{ the identical function } C_0 \to \cols(\Delta)$
      \While{$C_2 = \emptyset$}
        \If{some element of $C_0$ is taller than the shortest element of $C'_0$}
          \State $c \gets$ any element from $C_0$ that is taller than the shortest element of $C'_0$
          \State split $c$ into $c = d +_2 e$, where $d \in C'_0$ and $e$ is possibly $\emptyset$
          \State $C_0 \gets \left( C_0 \setminus \{c\} \right) \cup \{e\}$
          \State $G \gets \text{ the function } C_0 \to \cols(\Delta)$ sending all $f \neq c$ to $G(f)$ and $e$ to $G(c)$
          \State $C_1 \gets \bigl\{ \cfourfirst_{f \in G^{-1}(g)} f : g \in \cols(\Delta) \bigr\}$
          \State $C'_0 \gets C'_0 \setminus \{d\}$
          \State $C'_1 \gets C'_1 \cup \{d\}$
        \Else
          \State $C_2 \gets C_0$
          \State $C'_2 \gets \cols(\Delta') \setminus C'_1$
        \EndIf
      \EndWhile
    \State return $(C_1,C_2,C'_1,C'_2)$
  \end{algorithmic}
  \caption{The iterative non-deterministic definition of $C_1$ and $C'_1$}
  \phantomsection\label{figure:alg}
  \end{figure}

  This iteration will eventually terminate, but it isn't deterministic. 
  Its output depends on the choice of $c$ and $d$. 
  It's not hard to see that the space of all outputs of the algorithm includes the quadruple $(\cols(\Delta),\emptyset,\cols(\Delta'),\emptyset)$ if, 
  and only if, $\Delta \leq_\punc \Delta'$. 
  From now on we deal with the complementary case, $\Delta \nleq_\punc \Delta'$, 
  so $C_2,C'_2 \neq \emptyset$ for every output $(C_1,C_2,C'_1,C'_2)$ of the algorithm. 
  We define, for such every such output, 
  \begin{equation*}
    \begin{split}
      \Delta_j & := \cfourfirst_{c \in C_j} c , \\ 
      \Delta'_j & := \cfourfirst_{c \in C'_j} c 
    \end{split}
  \end{equation*}
  for $j = 1,2$. The definitions imply that 
  \begin{itemize}
    \item $|\Delta_1| = |\Delta'_1|$, 
    \item $n_2 := |\Delta_2| = |\Delta'_2| \neq 0$, and 
    \item every column of $\Delta_2$ is shorter than every column of $\Delta'_2$. 
  \end{itemize}

  Let $I'$ be a point in $H^{\Delta',\punc}_\lex(\A^2)$. 
  For finishing the second half of the proof, 
  we have to show that $I'$ is not contained in the closure of $H^{\Delta,\punc}_\lex(\A^2)$. 
  By wiggling certain parameters appearing in the generators of $I'$, 
  similarly as we did in the first half of the proof, 
  we deform $I'$ into an ideal $I'' \in H^{\Delta',\lin}_\lex(\A^2)$ which splits into 
  \begin{equation*}
    \begin{split}
      I'' & = I''_1 \cap I''_2 , \\
      I''_1 & := \bigcap_{c \in C'_1} I''_c ,
    \end{split}
  \end{equation*}
  where 
  \begin{itemize}
    \item each ideal $I''_c$ is supported in a point $z'_c$ on the $x_1$-axis, 
    \item $I''_2$ is supported in $0$, 
    \item all $z'_c$ are distinct, and distinct from $0$, 
    \item each $I''_c$ has the standard set $\Delta'_c$, and 
    \item $I''_2$ has the standard set $\Delta'_2$. 
  \end{itemize}
  By construction of $\Delta'_1$, and by the same arguments that we employed in the first half of the proof, 
  the ideal $I''_1$ is arbitrarily close to the closure of $H^{\Delta_1,\punc}_\lex(\A^2)$. 
  Moreover, each ideal $J \in H^{\Delta,\punc}_\lex(\A^2)$ can analogously be deformed into 
  \begin{equation*}
    \begin{split}
      J' & = J'_1 \cap J'_2 , \\
      J'_1 & := \bigcap_{c \in C_1} J'_c ,
    \end{split}
  \end{equation*}
  such that $J'_1$ is arbitrarily close to $H^{\Delta_1,\punc}_\lex(\A^2)$. 
  It all therefore boils down to showing that $I''_2$ is not contained in the closure of $H^{\Delta_2,\punc}_\lex(\A^2)$. 
  
  Consider the locus 
  \[
    U := H^{n_2,\punc}(\A^2) \setminus \bigl\{ I \subseteq S : \rank \bigl( (S / I) \cap \langle x_2^0, \ldots, x_2^{h(\Delta_2)} \rangle \bigr) 
    \leq h(\Delta_2) \bigr\} .
  \]
  This is an open subscheme of $H^{n_2,\punc}(\A^2)$, since the locus we remove is defined by rank inequalities, 
  i.e., the closed subscheme defined by a number of determinants. 
  Then, on the one hand, $I_2''$ lies in $U$, since $S / I_2''$ has the basis $(x^\alpha)_{\alpha \in \Delta'_2}$, 
  and the $h(\Delta_2)+1$ monomials $x_2^0, \ldots, x_2^{h(\Delta_2)}$ are a subfamily of that basis. 
  On the other hand, Lemma \ref{lmm:divisibility} implies that the lexicographic Gr\"obner basis of an ideal $I_2 \in H^{\Delta_2,\punc}_\lex(\A^2)$
  contains the polynomial 
  \[
    f_{(0,h(\Delta_2))} := x_2^{h(\Delta_2)} . 
  \]
  The family $(x_2^0, \ldots, x_2^{h(\Delta_2)})$ is therefore linearly dependent in $S / I_2$. 
  This shows that $U \cap H^{\Delta_2,\punc}_\lex(\A^2) = \emptyset$. 
  We have thus found an open neighborhood of $I_2''$ which doesn't meet $H^{\Delta_2,\punc}_\lex(\A^2)$. 
\end{proof}

\begin{lmm}
\phantomsection\label{lmm:simplestPunctualClosure}
  Consider the two-dimensional standard set
  \[
    \Delta := \{0\} \times [0,n-1] ,
  \]
  and any two-dimensional standard set $\Delta'$ of the same cardinality, $n$. 
  Then $\Delta \leq_\punc \Delta'$, 
  and $H^{\Delta',\punc}_\lex(\A^2)$ is contained in the closure of $H^{\Delta,\punc}_\lex(\A^2)$ in $H^{n,\punc}(\A^2)$. 
\end{lmm}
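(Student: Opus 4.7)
The plan is to handle the combinatorial claim $\Delta \leq_\punc \Delta'$ directly from the C4-sum reformulation of $\leq_\punc$ given just before the proof of Theorem \ref{thm:combinatorialDuality}(ii), and then to establish the geometric closure statement by combining Theorem \ref{thm:affineCells} with the classical irreducibility theorem of Brian\c{c}on for $H^{n,\punc}(\A^2)$.

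For the combinatorial part, I would first note that $\Delta = \{0\} \times [0, n-1]$ is a single column of height $n$, and then break this single column vertically into pieces of heights $|\Delta'_0|, \ldots, |\Delta'_{w'-1}|$, which is possible since these positive integers sum to $n$. Each piece is itself a single column, and forming their C4 sum in the first direction---which arranges single columns in decreasing order of height from left to right---recovers $\Delta'$, so $\Delta \leq_\punc \Delta'$ by the reformulation.

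For the geometric part, my first step is to invoke Theorem \ref{thm:affineCells}, which gives $H^{\Delta,\punc}_\lex(\A^2) \cong \A^{|\Delta| - w(\Delta)} = \A^{n-1}$, so this cell is irreducible of dimension $n-1$. I would then invoke Brian\c{c}on's classical result that the ambient scheme $H^{n,\punc}(\A^2)$ is itself irreducible of dimension $n-1$. Letting $C \subseteq H^{n,\punc}(\A^2)$ denote the closure of $H^{\Delta,\punc}_\lex(\A^2)$, the subset $C$ is irreducible as the closure of an irreducible set, and it inherits dimension $n-1$ from its dense open $H^{\Delta,\punc}_\lex(\A^2)$. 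Since an irreducible closed subvariety of maximal dimension inside an irreducible variety must coincide with that ambient variety, $C = H^{n,\punc}(\A^2)$, which in particular contains $H^{\Delta',\punc}_\lex(\A^2)$.

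The main obstacle in this plan is the reliance on Brian\c{c}on's irreducibility theorem, which is a deep global fact about $H^{n,\punc}(\A^2)$. A more self-contained approach would attempt to construct, for a given $I' \in H^{\Delta',\punc}_\lex(\A^2)$, an explicit family of curvilinear ideals of the form $\langle x_1 - p_t(x_2), x_2^n \rangle$ tending to $I'$. For non-curvilinear $I'$ this runs into the obstruction that the class of $x_1$ in $S/I'$ need not lie in the subalgebra generated by the class of $x_2$, so no single polynomial $p$ satisfies $x_1 - p(x_2) \in I'$, and any direct attempt to exhibit such approximations appears to recover the content of Brian\c{c}on's theorem itself. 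It therefore seems cleanest to invoke that result and conclude by dimension and irreducibility.
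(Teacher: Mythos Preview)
Your proposal is correct and follows essentially the same approach as the paper: the combinatorial claim is immediate from the definition of $\leq_\punc$, and the geometric claim follows because $H^{\Delta,\punc}_\lex(\A^2) \cong \A^{n-1}$ (the paper writes down the explicit isomorphism rather than citing Theorem~\ref{thm:affineCells}, but the content is the same) together with Brian\c{c}on's irreducibility theorem force the closure to be all of $H^{n,\punc}(\A^2)$. Your worry about relying on Brian\c{c}on is unnecessary here, since the paper itself invokes that result.
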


\begin{proof}
  The first assertion is immediate from the definition of $\leq_\punc$. 
  As for the second assertion, $H^{n,\punc}(\A^2)$ is irreducible of dimension $n-1$ \cite{briancon}. 
  Moreover, 
  \[
    H^\Delta_\lex(\A^2) \cap H^{n,\punc}(\A^2) \cong \A^{n-1} ,
  \]
  the isomorphism sending a point with coordinates $c_1,\ldots,c_{n-1}$ on the right-hand side to the ideal 
  \[
    I := \langle x_1 + c_{n-1}x_2^{n-1} + \ldots + c_1x_2, x_2^{n-1} \rangle
  \]
  defining a point on the left-hand side. 
  The closure of $H^{\Delta,\punc}_\lex(\A^2)$ is therefore the whole $H^{n,\punc}_\lex(\A^2)$. 
\end{proof}



\section{Weak incidences}
\phantomsection\label{sec:generalIncidences}

A decomposition \eqref{eqn:generalStratification} of a topological space $X$ into locally closed subspaces $X_i$ 
raises the problem \defn{incidences} among the subspaces $X_i$. 
We speak of \defn{strong incidence} if $\overline{X_i} \supseteq X_j$ 
and of \defn{weak incidence} if $\overline{X_i} \cap X_j \neq \emptyset$. 
If \eqref{eqn:generalStratification} is a stratification, 
then strong and weak incidence are the same thing, 
and the incidence problem is encoded in the poset $\es$. 

Given that the intersections of decomposition \eqref{bbA2} of $H^n(\A^2)$ 
with subschemes $H^{n,\et}(\A^2)$ and $H^{n,\punc}(\A^2)$ are both stratifications, 
one wonders if that decomposition itself is a stratification. It isn't. 

\begin{nonex}
  Consider the standard sets $\Delta$ and $\Delta'$ from Figure \ref{fig:non-example1}. 
  The scheme $H^{\Delta',\punc}_\lex(\A^2)$ is contained in the closure of $H^{\Delta,\punc}_\lex(\A^2)$ in $H^{6,\punc}(\A^2)$, 
  and therefore, $H^{\Delta'}_\lex(\A^2)$ intersects the closure of $H^\Delta_\lex(\A^2)$ in $H^6(\A^2)$ nontrivially. 
  However, $H^{\Delta',\et}_\lex(\A^2)$ doesn't intersect the closure of $H^{\Delta,\et}_\lex(\A^2)$ in $H^{6,\et}(\A^2)$. 
  The decomposition of $H^6(\A^2)$ from \eqref{bbA2} is therefore not a stratification. 
\end{nonex}

\begin{center}
\begin{figure}[ht]
  \begin{picture}(170,50)
    \put(10,20){\footnotesize $\Delta =$}
    \multiput(40,10)(0,10){2}{\line(1,0){30}}
    \put(40,30){\line(1,0){20}}
    \put(40,40){\line(1,0){10}}
    \multiput(40,10)(10,0){2}{\line(0,1){30}}
    \put(60,10){\line(0,1){20}}
    \put(70,10){\line(0,1){10}}
    \put(90,20){\footnotesize $\Delta' =$}
    \multiput(120,10)(0,10){2}{\line(1,0){40}}
    \multiput(120,30)(0,10){2}{\line(1,0){10}}
    \multiput(120,10)(10,0){2}{\line(0,1){30}}
    \multiput(140,10)(10,0){3}{\line(0,1){10}}
  \end{picture}
  \caption{Two standard sets showing that $H^n(\A^2)$ is not stratified by lexicographic Gr\"obner basins}
  \phantomsection\label{fig:non-example1}
\end{figure}
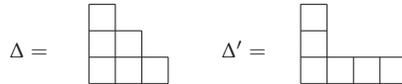
\end{center}

Similar non-examples can be constructed for all $n \geq 4$. 
However, Theorem \ref{thm:combinatorialDuality} and the results from Section \ref{sec:c4twoDirections}
provide ample supply of examples for weak incidence between Gr\"obner basins in $H^n(\A^2)$. 

\begin{ex}
\label{ex:oneAndTwo}
  Consider two standard sets $\Delta$ and $\Delta'$ such that 
  \[
    \Delta = \cfoursecond_{r \in \rows(\Lambda)} \bigl( \cfourfirst_{b \in r} \Delta_b \bigr) 
  \]
  for a collection of standard sets $\Delta_b$ indexed by the boxes $b$ appearing a standard set $\Lambda$, 
  arranged row by row, and analogously, 
  \[
    \Delta' = \cfoursecond_{r' \in \rows(\Lambda')} \bigl( \cfourfirst_{b' \in r'} \Delta_{b'} \bigr) 
  \]
  where $\Lambda \leq_\et \Lambda'$, $f: \Lambda \to \Lambda'$ 
  is a bijection sending elements from one and the same row of $\Lambda$ to elements from one and the same row of $\Lambda'$, and 
  $\Delta_b \leq_\punc \Delta'_{f(b)}$ for all boxes $b$ of $\Lambda$. 
  Then a proof similar to those from Sections \ref{sec:etaleIncidences} and \ref{sec:punctualIncidences} shows that 
  \[
    \overline{H^\Delta(\A^2)} \cap H^{\Delta'}(\A^2) \neq \emptyset .
  \]
  This is illustrated in Figure \ref{fig:weakIncidence}. An ideal $I \in H^\Delta(\A^2)$ splits into $I = \cap_{b \in \Lambda} I_b$, 
  where $\IN_\lex(I_b) = M_{\Delta_b}$, indexed by the boxes $b$ appearing in standard set $\Lambda$ 
  such that ideal indexed by boxes from one and the same row of $\Lambda$ are supported on one and the same horizontal line in $\A^2$.
  Ideals having this property come in a subfamily of $H^\Delta(\A^2)$. 
  When passing to the limit in $H^n(\A^2)$ of such a family, at least three types of degenerations may happen: 
  \begin{itemize}
    \item Horizontal lines may merge. 
    \item Each ideal $I_b$ may degenerate into an ideal $I'_{b'}$ 
    with a standard set $\Delta'_{b'}$ such that $\Delta_b <_\punc \Delta'_{b'}$. 
    \item Punctual ideals lying on the same horizontal line may merge horizontally. 
  \end{itemize}
  All three types of degeneration show up in Figure \ref{fig:weakIncidence}. 
  However, degeneration may also happen ``diagonally''---in a way not captured by the three bulleted items above. 
\end{ex}

  \begin{center}
  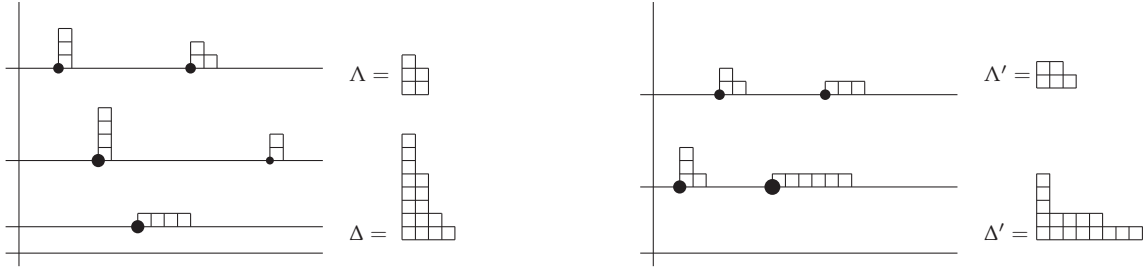
\begin{figure}[ht]
    \begin{picture}(450,120)
      \put(10,15){\line(1,0){120}}
      \put(15,10){\line(0,1){100}}
      \multiput(10,25)(0,25){2}{\line(1,0){120}}
      \put(10,85){\line(1,0){120}}
      \put(30,85){\circle*{4}}
      \multiput(30,85)(5,0){2}{\line(0,1){15}}
      \multiput(30,90)(0,5){3}{\line(1,0){5}}
      \put(80,85){\circle*{4}}
      \multiput(80,85)(5,0){2}{\line(0,1){10}}
      \put(80,95){\line(1,0){5}}
      \put(80,90){\line(1,0){10}}
      \put(90,85){\line(0,1){5}}
      \put(45,50){\circle*{5}}
      \multiput(45,50)(5,0){2}{\line(0,1){20}}
      \multiput(45,55)(0,5){4}{\line(1,0){5}}
      \put(110,50){\circle*{3}}
      \multiput(110,50)(5,0){2}{\line(0,1){10}}
      \multiput(110,55)(0,5){2}{\line(1,0){5}}
      \put(60,25){\circle*{5}}
      \multiput(60,25)(5,0){5}{\line(0,1){5}}
      \put(60,30){\line(1,0){20}}
      \put(140,80){\footnotesize $\Lambda =$}
      \multiput(160,75)(5,0){2}{\line(0,1){15}}
      \put(170,75){\line(0,1){10}}
      \multiput(160,75)(0,5){3}{\line(1,0){10}}
      \put(160,90){\line(1,0){5}}
      \put(140,20){\footnotesize $\Delta =$}
      \multiput(160,20)(5,0){2}{\line(0,1){40}}
      \put(170,20){\line(0,1){25}}
      \put(175,20){\line(0,1){10}}
      \put(180,20){\line(0,1){5}}
      \multiput(160,20)(0,5){2}{\line(1,0){20}}
      \put(160,30){\line(1,0){15}}
      \multiput(160,35)(0,5){3}{\line(1,0){10}}
      \multiput(160,50)(0,5){3}{\line(1,0){5}}
      \put(250,15){\line(1,0){120}}
      \put(255,10){\line(0,1){100}}
      \multiput(250,40)(0,35){2}{\line(1,0){120}}
      \put(280,75){\circle*{4}}
      \multiput(280,75)(5,0){2}{\line(0,1){10}}
      \put(290,75){\line(0,1){5}}
      \put(280,80){\line(1,0){10}}
      \put(280,85){\line(1,0){5}}
      \put(320,75){\circle*{4}}
      \multiput(320,75)(5,0){4}{\line(0,1){5}}
      \put(320,80){\line(1,0){15}}
      \put(265,40){\circle*{5}}
      \multiput(265,40)(5,0){2}{\line(0,1){15}}
      \put(275,40){\line(0,1){5}}
      \put(265,45){\line(1,0){10}}
      \multiput(265,50)(0,5){2}{\line(1,0){5}}
      \put(300,40){\circle*{6}}
      \multiput(300,40)(5,0){7}{\line(0,1){5}}
      \put(300,45){\line(1,0){30}}
      \put(380,80){\footnotesize $\Lambda' =$}
      \multiput(400,77.5)(5,0){3}{\line(0,1){10}}
      \put(415,77.5){\line(0,1){5}}
      \multiput(400,77.5)(0,5){2}{\line(1,0){15}}
      \put(400,87.5){\line(1,0){10}}
      \put(380,20){\footnotesize $\Delta' =$}
      \multiput(400,20)(5,0){2}{\line(0,1){25}}
      \multiput(410,20)(5,0){4}{\line(0,1){10}}
      \multiput(430,20)(5,0){3}{\line(0,1){5}}
      \multiput(400,20)(0,5){2}{\line(1,0){40}}
      \put(400,30){\line(1,0){25}}
      \multiput(400,35)(0,5){3}{\line(1,0){5}}
  \end{picture}
  \caption{A point in $H^{n,\punc}(\A^2)$ and two approximations in $H^{n,\lin}(\A^2)$}
  \phantomsection\label{fig:weakIncidence}
  \end{figure}
  \end{center}

We now present three necessary conditions for weak incidence on $H^n(\A^2)$. 
One of the conditions will use the \defn{natural partial ordering}, or \defn{dominance partial ordering} on $\st_n$, 
which is characterized by the two equivalent conditions \cite[p.7]{Macdonald}
\begin{equation*}
\begin{split}
  \Delta \leq \Delta' : & \iff \forall j : 
  \sum_{\substack{\text{columns } C \text{ in }\Delta \\ \text{ up to } j}}h(C) 
  \geq 
  \sum_{\substack{\text{columns } C' \text{ in }\Delta' \\ \text{ up to } j}}h(C') \\
  & \iff \forall i : 
  \sum_{\substack{\text{rows } R \text{ in }\Delta \\ \text{ up to } i}}h(R) 
  \leq 
  \sum_{\substack{\text{rows } R' \text{ in }\Delta' \\ \text{ up to } i}}h(R') .
\end{split}
\end{equation*}

\begin{pro}
\phantomsection\label{pro:lexByCols}
  If $H^{\Delta'}_\lex(\A^2)$ meets the closure of $H^\Delta_\lex(\A^2)$ in $H^n(\A^2)$, then 
  \begin{enumerate}[(i)]
    \item $\Delta \leq \Delta'$, 
    \item the tuple of rows of $\Delta$ is lexicographically smaller than the tuple of rows of $\Delta'$, and 
    \item the tuple of columns of $\Delta$ is lexicographically larger than the tuple of columns of $\Delta'$. 
  \end{enumerate}
\end{pro}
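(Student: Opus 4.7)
The plan is to prove (i) directly and then deduce (ii), (iii) from it via the two equivalent partial-sum characterizations of the dominance ordering recalled just above the proposition. Writing $h_i(\Lambda)$ for the height of the $i$-th column of a standard set $\Lambda$, (i) takes the form $\sum_{i=0}^{j} h_i(\Delta) \geq \sum_{i=0}^{j} h_i(\Delta')$ for every $j \geq 0$; read inductively in $j$, this forces that at the least index where the two column tuples differ, $\Delta$'s entry is the strictly larger one, which is exactly (iii). The row characterization of dominance gives (ii) in the same way. So the only content to prove is (i).

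For (i) I will pick $I_0 \in H^{\Delta'}_\lex(\A^2) \cap \overline{H^\Delta_\lex(\A^2)}$ and a one-parameter flat family of ideals $\{I_t\}$ in $H^n(\A^2)$ with $I_t \in H^\Delta_\lex(\A^2)$ for $t \neq 0$ and $I_t \to I_0$ as $t \to 0$. Standard upper semi-continuity of fibre dimension then says that for every fixed finite-dimensional subspace $V \subseteq S$, the function $t \mapsto \dim_\CC (I_t \cap V)$ can only jump upward at $t = 0$. I will apply this to the finite-dimensional truncation
\[
  V_{j,K} := \bigl\{ f \in S : \deg_{x_1}(f) \leq j,\ \deg_{x_2}(f) \leq K \bigr\}
\]
of dimension $(j+1)(K+1)$, for a fixed $j \geq 0$ and $K$ large.

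The heart of the argument is the combinatorial identity
\[
  \dim_\CC \bigl( I \cap V_{j,K} \bigr) \; = \; (j+1)(K+1) - \sum_{i=0}^{j} h_i(\Lambda),
\]
valid for every $I \in H^\Lambda_\lex(\A^2)$ and every $K \geq h_0(\Lambda) - 1$; in particular this dimension is constant on each Gr\"obner basin. To prove it I consider the $\CC[x_2]$-module filtration $F_{-1} = 0 \subset F_0 \subset F_1 \subset \ldots \subset S/I$, where $F_j$ is the $\CC[x_2]$-submodule generated by the classes of $1, x_1, \ldots, x_1^j$; for $K$ at least the largest column height of $\Lambda$, the image of $V_{j,K}$ in $S/I$ is precisely $F_j$. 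The reduced lex Gr\"obner basis of $I$ then identifies each successive quotient $F_j/F_{j-1}$ with a cyclic $\CC[x_2]$-module $\CC[x_2]/(p_j)$: when $(j, h_j)$ is an outer corner of $\Lambda$, one reads $p_j$ off as the $x_1^j$-coefficient of $f_{(j,h_j)}$, and when it is not, as the $x_1^j$-coefficient of $x_1 \cdot f_{(j-1,h_{j-1})}$. A short lex-comparison on initial terms shows $\deg p_j = h_j(\Lambda)$ in both cases, so $\dim F_j = \sum_{i=0}^{j} h_i(\Lambda)$.

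Plugging this identity into the semi-continuity inequality, applied once to a generic $I \in H^\Delta_\lex(\A^2)$ and once to $I_0 \in H^{\Delta'}_\lex(\A^2)$ with $K$ chosen large for both, yields $\sum_{i=0}^{j} h_i(\Delta) \geq \sum_{i=0}^{j} h_i(\Delta')$ for every $j$, which is exactly $\Delta \leq \Delta'$. The one genuinely non-routine step is the Gr\"obner-basis identification of the successive quotients of $F_j$, in particular checking that the two lower bounds on $\deg p_j$ (the one from the initial-term constraint and the one from lifting from column $j{-}1$) really match $h_j$; the semi-continuity step and the reduction of (ii), (iii) to (i) are then essentially automatic.
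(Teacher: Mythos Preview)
Your proof is correct and follows essentially the same approach as the paper: both arguments exploit the upper semi-continuity of the quantity $\dim F_j$, where $F_j$ is the image in $S/I$ of the polynomials of $x_1$-degree at most $j$, and both deduce (ii) and (iii) from (i) via the Macdonald reference. The only packaging differences are that the paper phrases semi-continuity by exhibiting the closed determinantal loci $Y_j = \{I : \rank F_j \leq |\Delta_j|\}$ containing $H^\Delta_\lex(\A^2)$ (hence its closure), rather than passing through a one-parameter family and a finite truncation $V_{j,K}$; and the paper computes $\dim F_j = |\Delta_j|$ in one line by observing that the standard monomials $(x^\alpha)_{\alpha \in \Delta,\,\alpha_1 \leq j}$ form a basis, so your filtration/$p_j$ argument, while correct, is more work than necessary.
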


\begin{proof}
  (i) is in the same spirit as the last paragraph from the proof of Theorem \ref{thm:combinatorialDuality} (ii). 
  For the time being, we use the notation $\N_j := \{ \alpha \in \N^2 : p_1(\alpha) \leq j \}$ and $\Delta_j := \Delta \cap \N^2_j$. 
  Consider the closed subschemes 
  \[
    Y_j := \bigl\{ I \in H^n(\A^2) : \rank \bigl( (S / I) \cap \langle x^\alpha : \alpha \in \N_j \rangle \bigr) 
    \leq \bigl| \Delta_j \bigr| \bigr\} 
  \]
  of $H^n(\A^2)$ and their intersection 
  \[
    Y := \bigcap_{j = 0}^{w(\Delta)} Y_j .
  \]
  For each $I \in H^\Delta_\lex(\A^2)$ and each outer corner $\alpha$ of $\Delta$, 
  there exists a unique polynomial $f_\alpha \in I$ with lex-initial exponent $\alpha$ and trailing exponents in $\Delta$. 
  This implies that $I \in Y$, more precisely, that $H^\Delta_\lex(\A^2)$ is a subscheme of $Y$. 
  (Thorough schematic arguments for statements like this is given in \cite{strata}, but shall not be carried out here.)
  If $I'$ is an ideal living in the intersection under discussion, 
  then $I'$ arises as the limit of a flat family in $H^\Delta_\lex(\A^2)$, so of a flat family in $Y$. 
  The limit $I'$ therefore also lies in $Y$. 
  For all $j$, the family $(x^\alpha)_{\alpha \in \Delta'_j}$ 
  is a basis of the subspace of $(S / I') \cap \langle x^\alpha : \alpha \in \N^2_j \rangle$. 
  Thus 
  \[
    |\Delta'_j| = \rank \bigl( (S / I') \cap \langle x^\alpha : \alpha \in \N^2_j \rangle \bigr) \leq |\Delta_j| ,
  \]
  as was claimed. 
  
  (ii) and (iii) are consequences of (i) by \cite[(1.10), p.7]{Macdonald}. 
\end{proof}

We conclude the paper with three remarks on the conditions from the last proposition. 
\begin{itemize}
  \item Partial orderings $\leq_\et$ and $\leq_\punc$ are both refinements $\leq$ in the sense that 
  $\alpha \leq_\et \beta$ implies $\alpha \leq \beta$, and 
  $\alpha \leq_\punc \beta$ implies $\alpha \leq \beta$. 
  Does there exist a common refinement of $\leq_\et$ and $\leq_\punc$? 
  \item Evain has studied decompositions and weak incidences 
  among locally closed schemes related to the punctual Hilbert scheme \cite{laurentIncidences}. 
  The schemes he studied are not quite $H^{n,\punc}(\A^2)$, 
  but nevertheless, Theorem 8 of the cited paper is related to Proposition \ref{pro:lexByCols}. 
  \item The conditions for weak incidence from our proposition here are easily checked not to be sufficient. 
\end{itemize}


\bibliography{references}
\bibliographystyle{amsalpha}

\end{document}